\DeclareMathOperator\Tr{Tr}
\newtheorem{theorem}{Theorem}[section]
\newtheorem{lemma}[theorem]{Lemma}
\newtheorem{proposition}[theorem]{Proposition}
\theoremstyle{definition}
\newtheorem{definition}[theorem]{Definition}
\theoremstyle{remark}
\newtheorem{remark}[theorem]{Remark}
\numberwithin{equation}{section}
\begin{document}
\title[The CLT for the LES of the sum of independent rank one matrices]
{The Central Limit Theorem for  Linear Eigenvalue Statistics
 of the Sum of Independent Matrices of Rank One}

\author[O.\ Gu\'edon]{Olivier Gu\'edon}
\address{Department of Mathematics, University Paris-Est, 5, Bd Descartes, Champs sur Marne,
77454,  Marne la Vall\'{e}e, C\'{e}dex 2, France}
\email{{olivier.guedon@univ-mlv.fr}}

\author[A.\ Lytova]{Anna Lytova}
\address{Department of Mathematics, B. Verkin Institute for Low Temperature Physics and Engineering
of the NASU, 47 Lenin Ave., Kharkiv 61103, Ukraine}
\email{{anna.lytova@gmail.com};{lytova@ilt.kharkov.ua}}

\author[A.\ Pajor]{Alain Pajor}
\address{Department of Mathematics, University Paris-Est, 5, Bd Descartes, Champs sur Marne,
77454,  Marne la Vall\'{e}e, C\'{e}dex 2, France}
\email{{ Alain.Pajor@univ-mlv.fr}}

\author[L.\ Pastur]{Leonid Pastur}
\address{Department of Mathematics, B. Verkin Institute for Low Temperature Physics and Engineering
of the NASU, 47 Lenin Ave., Kharkiv 61103, Ukraine}
\email{{pastur@ilt.kharkov.ua}}

\dedicatory{Dedicated to Professor V. A. Machenko on occasion of his
90th birthday}

\thanks{Research was partly supported by the Network of Mathematical Research
(France - Ukraine) 2013 -- 2015.}
\thanks{L. P. is grateful to the Bezout Research Foundation (Labex Bezout)
for the invitation to the University Marne la Vall\'ee (France),
where the final version of the paper was prepared. }

\keywords{}
\subjclass[2010]{Primary 60B20, 47A75; Secondary 60F05}

\begin{abstract}
We consider $n\times n$  random matrices $M_{n}=\sum_{\alpha
=1}^{m}{\tau _{\alpha }}\mathbf{y}_{\alpha }\otimes
\mathbf{y}_{\alpha }$, where $\tau _{\alpha }\in \mathbb{R}$,
$\{\mathbf{y}_{\alpha }\}_{\alpha =1}^{m}$ are i.i.d. isotropic
random vectors of $\mathbb{R}^n$ (see Definition \ref{d:isotropic}),
whose components are not necessarily independent. It was shown in
\cite{Pa-Pa:09} that if $m,n\rightarrow \infty$,  $m/n\rightarrow
c\in \lbrack 0,\infty )$, the Normalized Counting Measures of
$\{\tau _{\alpha }\}_{\alpha =1}^{m}$ converge weakly and
$\{\mathbf{y}_\alpha\}_{\alpha=1}^m$ are \textit{good} (see
Definition \ref{d:good}), then the Normalized Counting Measures of
eigenvalues of $M_{n}$ converge weakly in probability to a
non-random limit found in \cite{Ma-Pa:67}. In this paper we indicate
a subclass of good vectors, which we  call \textit{very good} (see
Definition \ref{d:very}) and for which the linear eigenvalue
statistics of the corresponding matrices converge in distribution to
the Gaussian law, i.e., the Central Limit Theorem is valid (see
Theorem \ref{t:clt})). An important example of good vectors, studied
in \cite{Pa-Pa:09} are the vectors with log-concave distribution
(see Definition \ref{d:isotropic}). We discuss the conditions for
them, guaranteing the validity of the Central Limit Theorem for
linear eigenvalue statistics of corresponding matrices.
%Here we
%study fluctuations of linear eigenvalue statistics of $M_n$. We
%show that if, in addition, the probability law of
%$\mathbf{y}_{\alpha }$ is unconditional, then a wide class of linear
%eigenvalue statistics of $M_{n}$ satisfies the CLT provided that the
%limiting distribution of $\{\tau _{\alpha }\}_{\alpha =1}^{m}$
%vanishes sufficiently fast at infinity.
\end{abstract}

\maketitle

\section{Introduction: Problem and Main Result}

Let $\{\mathbf{y}_{\alpha }\}_{\alpha =1}^{m}$ be i.i.d. random vectors of $%
\mathbb{R}^{n}$, and $\{\tau _{\alpha }\}_{\alpha =1}^{m}$ be a collection
of real numbers. Consider the $n\times n$ real symmetric random matrix
\begin{equation}
M_{n}=\sum_{\alpha =1}^{m}\tau _{\alpha }L_{\mathbf{y}_{\alpha }},
\label{Mnm}
\end{equation}%
where$\;L_\mathbf{y}=\mathbf{y}\otimes \mathbf{y}$ is the $n\times n$
rank-one matrix defined as $L_{\mathbf{y}}\mathbf{x}=(\mathbf{y},\mathbf{x})%
\mathbf{y},\;\forall \mathbf{x}\in \mathbb{R}^{n}$ %     \label{LYX}
% \end{equation}%
and $(\;,\;)$ is the standard Euclidean scalar product in $\mathbb{R}^{n}$.

Denote $\{\lambda^{(n)}_{l}\}_{l=1}^n$ the eigenvalues of $M_{n}$
counting
their multiplicity and introduce their Normalized Counting Measure (NCM) $%
N_{n}$, setting for any $\Delta \subset \mathbb{R}$
\begin{equation}
N_{n}(\Delta )=\mathrm{Card}\{l\in \lbrack 1,n]:\lambda^{(n)} _{l}\in
\Delta \}/n.  \label{NCM}
\end{equation}
Likewise, define the NCM $\sigma _{m}$ of $\{\tau _{\alpha }\}_{\alpha
=1}^{m}$,
\begin{equation}
\sigma _{m}(\Delta )=\mathrm{Card}\{\alpha\in \lbrack
1,m]:\tau_\alpha\in\Delta\}/m,  \label{sigm}
\end{equation}
and assume that the sequence $\{\sigma _{m}\}_{m=1}^\infty$ converges
weakly:
\begin{equation}
\lim_{m\rightarrow \infty }\sigma _{m}=\sigma, \; \sigma (\mathbb{R})=1.
\label{sigma}
\end{equation}
It follows from the results of \cite{Ma-Pa:67} that if (\ref{sigma})
holds, the mixed moments up to the 4th order of the components of
$\{\mathbf{y}_{\alpha }\}_{\alpha =1}^{m}$ satisfy certain
conditions as $n \to \infty$ (valid, in particular, for the vectors
uniformly distributed over the unit sphere of $\mathbb{R}^{n}$ (or
$\mathbb{C} ^{n})$ and for vectors with independent i.i.d.
components), and
\begin{equation}
n\rightarrow \infty ,\;m\rightarrow \infty ,\;m/n\rightarrow c\in \lbrack
0,\infty ),  \label{nmc}
\end{equation}
then there exists a non-random measure $N$ of total mass 1 such that for any
interval $\Delta \subset \mathbb{R}$ we have the convergence in probability
\begin{equation}
\lim_{n\rightarrow \infty ,\;m\rightarrow \infty ,\;m/n\rightarrow
c}N_{n}(\Delta )=N(\Delta ).  \label{NnmN}
\end{equation}
The measure $N$ can be found as follows. Introduce its Stieltjes transform
(see e.g. \cite{Ak-Gl:93})
\begin{equation}
f(z)=\int \frac{N(d\lambda )}{\lambda -z},\;\Im z\neq 0.  \label{f}
\end{equation}%
Here and below the integrals without limits denote the integrals over $%
\mathbb{R}$. Then $f$ is uniquely determined by the functional equation
\begin{equation}
zf(z)=c-1-c\int(1+\tau f(z))^{-1}\sigma(d\tau)  \label{MPE}
\end{equation}
considered in the class of functions analytic in $\mathbb{C\setminus \mathbb{%
\ R}}$ and such that $\Im f(z)\Im z\geq 0,\;\Im z\neq 0$. Since the
Stieltjes transform determines $N$ uniquely by the formula
\begin{equation}
\lim_{\varepsilon \rightarrow 0^{+}}\frac{1}{\pi }\int \varphi (\lambda )\Im
f(\lambda +i\varepsilon )d\lambda =\int \varphi (\lambda )N(d\lambda ),
\label{SP}
\end{equation}
valid for any continuous function with compact support, (\ref{MPE})
determines $N$ uniquely.

Note that  if the components $\{{y}_{\alpha j}\}_{j=1}^{n}$ of
$\mathbf{y}_{\alpha
},\;\alpha =1,...,m$ are i.i.d. random variables of zero mean and variance $%
1/n$, the matrix $M_{n}$ is usually written as
\begin{equation}  \label{SC}
M_{n}=Y^TTY, \quad Y= \{{y}_{\alpha k}\}^{m,n}_{\alpha,k=1},\quad
T=\{\tau_\alpha \delta_{\alpha\beta}\}_{\alpha,\beta=1}^m,
\end{equation}
and is closely related to the sample covariance matrix of statistics. A
particular case of this for $T=I_m$ and Gaussian $\{{y}_{\alpha
j}\}_{\alpha=j=1}^{m,n}$ is known since the 30s as the Wishart matrix (see e.g. \cite%
{Mui:82}).

The random matrices (\ref{Mnm}) appear also in the local theory
of Banach spaces and asymptotic convex geometry (see e.g. \cite%
{{Da-Sz:01},{Sz:06}}). A particular important case arising in this framework
is related to the study of geometric parameters associated to i.i.d. random
points
%$\{\mathbf{y}_{\alpha }\}_{\alpha =1}^m$
uniformly distributed over a convex body in $\mathbb{R}^n$ and of
the asymptotic geometry of random convex polytopes generated by these
points (see e.g. \cite{ {Au:07}, {Bou:96}, {Gi-Mi:00},
{Gi-Ts:03},{Li-Ru-Paj-Tom:05}}). This motivated to consider
random vectors known as isotropic and having a log-concave
distribution.
%It was proved in \cite{Pa-Pa:09} that (\ref{NnmN}) and
%(\ref{MPE}) remains
%valid for the case where the probability law of the i.i.d. vectors $\{%
%\mathbf{y}_{\alpha }\}_{\alpha=1}^m$ is isotropic and log-concave.
Recall the corresponding definitions.

\begin{definition}
\label{d:isotropic} (i) A random vector $\mathbf{y}=\{y_j\}_{j=1}^n
\in \mathbb{R}^{n}$ is called \textit{isotropic} if
\begin{equation}
\mathbf{E}\{y_{j} \}=0,\quad \mathbf{E}\{y_{j}y_{k}\}=n^{-1}\delta_{jk}, \;
j,k=1,...,n.  \label{isotropic}
\end{equation}
%where $|\mathbf{x}|$ denotes the euclidian norm of $\mathbf{x}$.
%\end{definition}
%%Note that in the literature on the asymptotic convex geometry
%the normalization factor $n^{-1}$ in (\ref{isotropic}) is usually absent.

%\begin{definition}
(ii) A measure $\mu $ on $\mathbb{C}^{n}$ is \textit{log-concave} if for any
measurable subsets $A,B$ of $\mathbb{C}^{n}$ and any $\theta \in \lbrack 0,1]
$,
\begin{equation*}
\mu (\theta A+(1-\theta )B)\geq \mu (A)^{\theta }\mu (B)^{(1-\theta )}
\end{equation*}
whenever $\theta A+(1-\theta )B=\{\theta y_{1}+(1-\theta )y_{2}\,:\,y_{1}\in
A,\;y_{2}\in B\}$ is measurable.
\end{definition}
It was proved in \cite{Pa-Pa:09} that (\ref{NnmN}) and (\ref{MPE})
remain valid in the case where the probability law of the i.i.d. vectors $\{%
\mathbf{y}_{\alpha }\}_{\alpha=1}^m$ is isotropic and log-concave.

In fact, a more general result was established in \cite{Pa-Pa:09}.
Introduce
\begin{definition}
\label{d:good} A random isotropic vector $\mathbf{y}\in \mathbb{R}^{n}$ is
called \textit{good} if for any $n\times n$ complex matrix $A_n$ which does
not depend on $\mathbf{y}$, we have
\begin{equation}
\mathbf{Var}\{(A_n\mathbf{y},\mathbf{y})\}\leq ||A_{n}||^2\delta_{n},\quad
\delta_{n}=o(1),\;n\rightarrow\infty,  \label{good}
\end{equation}
where $||A_{n}||$ is the operator norm of $A_{n}$.
\end{definition}
We have then \cite{Pa-Pa:09}:
\begin{theorem}
\label{t:MPP} Let $n$ and $m$ be positive integers satisfying (%
\ref{nmc}), $\{\mathbf{y}_{\alpha }\}_{\alpha =1}^{m}$ be i.i.d.
good vectors of \ $\mathbb{R}_n$, and $\{\tau _{\alpha }\}_{\alpha
=1}^{m}$ be real
numbers satisfying (\ref{sigma}). Consider the random matrix $M_{n}$ (\ref{Mnm}%
) and the Normalized Counting Measure of its eigenvalues $N_{n}$ (\ref{NCM}).
Then for any interval $\Delta \subset \mathbb{R}$ we have in probability
\begin{equation}
\lim_{n\rightarrow \infty ,m\rightarrow \infty ,m/n\rightarrow c\in \lbrack
0,\infty )}N_{n}(\Delta )=N(\Delta ),  \label{NnmNP}
\end{equation}
where the limiting non-random measure $N$ is given by (\ref{NnmN}) -- (\ref%
{SP}).
\end{theorem}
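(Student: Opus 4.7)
My plan is to reduce the convergence (\ref{NnmNP}) of NCMs to pointwise convergence in probability of their Stieltjes transforms
\[
f_n(z) = \int \frac{N_n(d\lambda)}{\lambda - z} = \frac{1}{n}\Tr G_n(z),\qquad G_n(z) = (M_n - zI)^{-1},\qquad \Im z \neq 0.
\]
Once I show $f_n(z) \to f(z)$ in probability for every $z$ with $\Im z \neq 0$, (\ref{SP}) together with the standard compactness argument for Stieltjes transforms upgrades this to (\ref{NnmNP}). I split the task into (a) a self-averaging bound $\mathbf{Var}\{f_n(z)\} \to 0$, and (b) the derivation of an asymptotic functional equation for $\bar f_n(z) := \mathbf{E}\{f_n(z)\}$ that matches (\ref{MPE}) in the limit, closed by uniqueness of the solution of (\ref{MPE}).

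For (a) I would use a martingale decomposition along the filtration $\mathcal{F}_\alpha = \sigma(\mathbf{y}_1, \ldots, \mathbf{y}_\alpha)$, writing $f_n - \bar f_n = \sum_{\alpha=1}^m \Delta_\alpha$ with $\Delta_\alpha$ the increment under $\mathbf{E}[\,\cdot\,|\mathcal{F}_\alpha] - \mathbf{E}[\,\cdot\,|\mathcal{F}_{\alpha-1}]$. Setting $M_n^{(\alpha)} = M_n - \tau_\alpha L_{\mathbf{y}_\alpha}$ and $G_n^{(\alpha)} = (M_n^{(\alpha)} - z)^{-1}$, the resolvent $G_n^{(\alpha)}$ is independent of $\mathbf{y}_\alpha$, so $\Delta_\alpha$ equals the same expression applied to $f_n - f_n^{(\alpha)}$; the elementary rank-one trace bound $|\Tr G_n - \Tr G_n^{(\alpha)}| \leq |\Im z|^{-1}$ then yields $|\Delta_\alpha| \leq 2/(n|\Im z|)$ pointwise, and orthogonality of martingale differences gives $\mathbf{Var}\{f_n(z)\} \leq 4m/(n^2 |\Im z|^2) = O(1/n)$. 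This step uses only the rank-one structure of the summands, not the good hypothesis.

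For (b) I would use the resolvent identity $zG_n = -I + \sum_\alpha \tau_\alpha L_{\mathbf{y}_\alpha} G_n$ together with the Sherman--Morrison formula
\[
(G_n\mathbf{y}_\alpha, \mathbf{y}_\alpha) = \frac{(G_n^{(\alpha)}\mathbf{y}_\alpha, \mathbf{y}_\alpha)}{1 + \tau_\alpha (G_n^{(\alpha)}\mathbf{y}_\alpha, \mathbf{y}_\alpha)}
\]
to derive the pre-limit identity
\[
zf_n(z) = \frac{m}{n} - 1 - \frac{1}{n}\sum_{\alpha=1}^m \frac{1}{1 + \tau_\alpha (G_n^{(\alpha)}\mathbf{y}_\alpha, \mathbf{y}_\alpha)}.
\]
Since $\mathbf{y}_\alpha$ is independent of $G_n^{(\alpha)}$ and isotropic, $\mathbf{E}\{(G_n^{(\alpha)}\mathbf{y}_\alpha, \mathbf{y}_\alpha)\mid G_n^{(\alpha)}\} = f_n^{(\alpha)}$, while the good property (\ref{good}) applied to $A_n = G_n^{(\alpha)}$ gives $\mathbf{Var}\{(G_n^{(\alpha)}\mathbf{y}_\alpha, \mathbf{y}_\alpha)\mid G_n^{(\alpha)}\} \leq \|G_n^{(\alpha)}\|^2 \delta_n \leq \delta_n / |\Im z|^2 \to 0$. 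Combined with $|f_n - f_n^{(\alpha)}|\leq 1/(n|\Im z|)$ and the self-averaging of (a), this lets me replace $(G_n^{(\alpha)}\mathbf{y}_\alpha, \mathbf{y}_\alpha)$ by $\bar f_n$ in the denominator; after expectation and using (\ref{sigma}) and (\ref{nmc}) I arrive at
\[
z\bar f_n(z) = \frac{m}{n} - 1 - \frac{m}{n}\int \frac{\sigma_m(d\tau)}{1 + \tau \bar f_n(z)} + o(1).
\]
Since $\{\bar f_n\}$ is uniformly bounded by $1/|\Im z|$ and analytic off the real axis, it is normal; any subsequential limit $f^*$ satisfies $\Im f^*(z)\Im z \geq 0$ and obeys (\ref{MPE}), so uniqueness forces $f^* \equiv f$, completing (b).

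The step I expect to be hardest is justifying the replacement of $(G_n^{(\alpha)}\mathbf{y}_\alpha, \mathbf{y}_\alpha)$ by $\bar f_n$ \emph{inside} the denominator $1 + \tau_\alpha (G_n^{(\alpha)}\mathbf{y}_\alpha, \mathbf{y}_\alpha)$, because when $|\tau_\alpha|$ is large the map $w \mapsto (1+\tau_\alpha w)^{-1}$ is not Lipschitz in $w$ on the relevant scale, and a crude variance estimate is insufficient. The identity
\[
\frac{1}{1 + \tau_\alpha (G_n^{(\alpha)}\mathbf{y}_\alpha, \mathbf{y}_\alpha)} = 1 - \tau_\alpha (G_n\mathbf{y}_\alpha, \mathbf{y}_\alpha),
\]
combined with the sign structure $\Im(G_n^{(\alpha)}\mathbf{y}_\alpha, \mathbf{y}_\alpha) = \Im z\,\|G_n^{(\alpha)*}\mathbf{y}_\alpha\|^2$ and $\Im \bar f_n \cdot \Im z > 0$, gives enough control on both denominators, but carrying this through requires first truncating $\tau$ (using (\ref{sigma}) for a uniform-in-$m$ tail bound) before commuting the replacement with the $\sigma_m$-integration. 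The remaining manipulations are routine resolvent and Stieltjes-transform calculus.
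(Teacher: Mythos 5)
The paper does not prove Theorem~\ref{t:MPP}: it is quoted directly from the reference \cite{Pa-Pa:09} (note the sentence ``We have then \cite{Pa-Pa:09}:'' preceding the statement), so there is no in-paper proof to compare against. Judging your outline against the cited literature and against the pre-limit identities the paper itself develops in Sections~3 and~5, your two-part plan --- martingale self-averaging plus an approximate self-consistent equation closed by uniqueness of solutions of (\ref{MPE}) --- is the correct strategy, and your pre-limit identity $zf_n = m/n - 1 - n^{-1}\sum_\alpha A_{\alpha n}^{-1}$, via the rank-one perturbation formulas, matches (\ref{r1}), (\ref{g-ga}), (\ref{AEA}) exactly. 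Part (a) is correct as written, and you correctly locate the real difficulty in part (b).

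However, part (b) is a sketch with two load-bearing gaps that ``sign structure'' alone does not close. First, to control $|1+\tau\bar f_n|\geq|\tau|\,\Im\bar f_n$ you need a lower bound on $\Im\bar f_n(z)$ that is uniform along the subsequence; this must be extracted from tightness of $\{\mathbf{E}N_n\}$ (or by the contradiction that $\bar f_n\to 0$ would force $0=-1$ in (\ref{MPE})). You invoke normal families for the limit passage but never connect them to this quantitative lower bound, which is needed \emph{before} the limit is taken. Second, and more seriously: after inserting $|A_{\alpha n}^{-1}|=|1-\tau_\alpha(G\mathbf{y}_\alpha,\mathbf{y}_\alpha)|$ (your sign is the right one; the paper's (\ref{A=}) has a sign typo) and applying Cauchy--Schwarz, the replacement error is controlled by something of the order $\delta_n^{1/2}(\Im\bar f_n)^{-1}\cdot m^{-1}\sum_\alpha(1+|\tau_\alpha|)$. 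But hypothesis (\ref{sigma}) is weak convergence only; it yields tightness of $\sigma_m$ but \emph{not} boundedness of $m^{-1}\sum_\alpha|\tau_\alpha|$. Thus the truncation of $\tau$ you mention in passing at the end is not a refinement but the key mechanism: one must first replace the $\alpha$'s with $|\tau_\alpha|>T$ by zero (a perturbation of rank $\le m\,\sigma_m(|\tau|>T)$, whose effect on $f_n$ is $\le m\,\sigma_m(|\tau|>T)/(n|\Im z|)$ and which is uniformly small by tightness), run the entire replacement argument for the truncated ensemble where $|\tau_\alpha|\le T$, and only then send $T\to\infty$, using (\ref{sigma}) again to identify the limit. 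Without spelling this out the $\sigma_m$-integration in your derivation of (\ref{MPE}) is not justified.
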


%The proof in \cite{Pa-Pa:09} that isotropic vectors with log-concave distribution
%are good is based on the result obtained in \cite{Kl2:07,FGP:06}, according
%to which
%\begin{theorem} \label{t:Kl07}  [B.Klartag'07]
%we have for these vectors
%there exist $C>0$, $\beta\in(0,1)$ such that for any isotropic random vector
%$\mathbf{y}\in\mathbb{R}^n$ with a log-concave distribution
%\begin{equation}
%\mathbf{Var}\{|\mathbf{y}|^2\}=o(1), n \to \infty.  \label{beta}
%\end{equation}
%\end{theorem}

It follows from Theorem {\ref{t:MPP}} that if $M_{n}$ is given by (\ref%
{Mnm}), where $\{\mathbf{y}_{\alpha }\}_{\alpha =1}^{m_n}$ are i.i.d. good
vectors and
\begin{equation}
\mathcal{N}_n[\varphi]=\sum_{j=1}^n \varphi(\lambda^{(n)}_{j}),  \label{Nn}
\end{equation}
is the \emph{linear eigenvalue statistic} corresponding to any
continuous and bounded \emph{test-function} $\varphi: \mathbb{R} \to
\mathbb{C}$, then we have in probability
\begin{equation}
\lim_{n\rightarrow \infty ,m\rightarrow \infty ,m/n\rightarrow c\in \lbrack
0,\infty )}n^{-1}\mathcal{N}_n[\varphi]=\int \varphi(\lambda)dN(\lambda).
\label{Nnl}
\end{equation}
%  where  $N$ is defined in (\ref  {NnmN}) -- (\ref{SP}).
This can be viewed as an analog of the Law of Large Numbers of probability
theory for (\ref{Nn}). In this paper we deal with the fluctuations of $%
\mathcal{N}_n$ around its limit (\ref{Nn}), i.e. with an analog of the
Central Limit Theorem (CLT) of probability theory. Our goal is to find a
class of i.i.d. good vectors and a class of test functions such that
the centered and appropriately normalized linear eigenvalue statistics
\begin{equation}
\mathcal{N}^\circ_n[\varphi]/\nu_n, \quad \mathcal{N}^\circ_n[\varphi]=
(\mathcal{N}_n[\varphi]-\mathbf{E}\{\mathcal{N}%
_n[\varphi]\}  \label{Nn0}
\end{equation}
%with smooth enough test-function $\varphi$
converge in distribution to a Gaussian random variable.

There is a number of papers on the CLT for linear eigenvalue
statistics of matrices (\ref{Mnm}) where $\{{y}_{\alpha
j}\}_{\alpha,j=1}^{m,n}$ are independent, i.e. for sample covariance
matrices (\ref{SC})
% $4+\varepsilon$, %  moments
(see \cite{Ba-Si:04,Gi:01,Ly-Pa:08,P-Zh:08,Sh:11} and references therein).
Unfortunately, much less is known in the case where the components of $%
\mathbf{y}_\alpha$'s are dependent (see e.g. \cite{Pa-Sh:11}, Chapter 17 and
references therein).

An important step in proving the CLT is the asymptotic analysis of the
variance of the corresponding linear statistic
\begin{equation}
\mathbf{Var}\{\mathcal{N}_n[\varphi]\} := \mathbf{E}\{(\mathcal{N}%
^\circ_n[\varphi])^2\},  \label{vare}
\end{equation}
in particular, the proof of a bound
\begin{equation}
\mathbf{Var}\{\mathcal{N}_n[\varphi]\}\leq C_n||\varphi||^2_H,
\label{Nvar}
\end{equation}
where $||...||_H$ is a functional norm and $C_n$ depends only on
$n$, or even an asymptotic of the variance. This determines the
normalization factor $\nu_n$ in (\ref{Nn0}) and the class $H$ of
test-functions for which the CLT is valid.

It appears that for many random matrices normalized so that there
exists a limit of their NCM, in particular for sample covariance
matrices (\ref{SC}), the variance of linear eigenvalue statistic
with $\varphi \in C^1$ admits the bound
\begin{equation}
\mathbf{Var}\{\mathcal{N}_n[\varphi]\}=O(1),\quad
n\rightarrow\infty, \label{VarO1}
\end{equation}
or even a limit as $n \to \infty$. Thus the CLT has to be valid
for (\ref{Nn0}) without any $n$-dependent normalization factor $\nu_n$
\cite{Pa-Sh:11}. This has to be compared with the generic situation
in probability theory, where the variance of a linear statistic of
i.i.d. random variables is proportional to $n$ for any bounded
$\varphi$, hence the CLT is valid for an analog of (\ref{Nn0}) with
$\nu=n^{-1/2}$.

To formulate the version of (\ref{VarO1}), which we will prove and use in
this paper, introduce
\begin{definition}
\label{d:unc} The distribution of random vector $\mathbf{y}\in\mathbb{R}^n$
is called \textit{unconditional} if its components $\{y_j\}_{j=1}^n$ have
the same joint distribution as $\{\pm y_j\}_{j=1}^n$ for any choice of signs.
\end{definition}
\begin{lemma}
\label{l:Var} Let
$\mathbf{y}=\{y_{i}\}_{j=1}^{n}$ be an isotropic random
vector having an unconditional distribution and satisfying
%$\forallj,k=1,...,n$
\begin{equation}\label{a22L}
a_{2,2}:=\mathbf{E}\{y_{j}^{2}y_{k}^{2}\}=n^{-2}+O(n^{-3}),\;j\neq k,\;\;
\kappa _{4} :=\mathbf{E}\{y_{j}^{4}\}-3a_{2,2}=O(n^{-2}).
\end{equation}
Consider the random matrix $M_{n}$ of (\ref{Mnm}) in which $m$ and $n$ satisfy
(\ref{nmc}), $\{y_{\alpha
}\}_{\alpha =1}^{m}$ are i.i.d. random vectors satisfying (\ref{a22L}) and $%
\{\tau _{\alpha }\}_{\alpha =1}^{m}$  are non-negative real numbers with
the limiting counting distribution $\sigma $ (\ref{sigma}) having a finite
fourth moment:
\begin{equation}
m_{4}:=\int_{0}^{\infty }\tau ^{4}d\sigma (\tau )<\infty .  \label{m4}
\end{equation}%
Then we have for all sufficiently large $m$ and $n$
\begin{equation}
\mathbf{Var}\{\mathcal{N}_{n}[\varphi ]\}\leq C||\varphi ||_{2+\delta }^{2},
\label{apriory}
\end{equation}%
where $C$ is an absolute constant and
\begin{equation}
||\varphi ||_{2+\delta}^{2}=\int (1+2|k|)^{2(2+\delta)}|\widehat{\varphi }(k)|^{2}dk,\quad
\widehat{\varphi }(k)=\int e^{ikx}\varphi (x)dx.  \label{Hs}
\end{equation}
\end{lemma}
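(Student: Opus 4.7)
The plan is to reduce the variance bound to controlling the variance of the characteristic trace $U(k) := \Tr e^{-ikM_n}$, and then to estimate the latter by a martingale decomposition along $\mathbf{y}_1,\dots,\mathbf{y}_m$. By Fourier inversion,
\begin{equation*}
\mathcal{N}_n^\circ[\varphi] = \frac{1}{2\pi}\int \hat\varphi(k)\,U^\circ(k)\,dk, \qquad U^\circ(k):=U(k)-\mathbf{E}\{U(k)\},
\end{equation*}
so bounding the covariance $\mathbf{E}\{U^\circ(k_1)\overline{U^\circ(k_2)}\}$ by Cauchy--Schwarz gives $\mathbf{Var}\{\mathcal{N}_n[\varphi]\} \leq (2\pi)^{-2}\bigl(\int |\hat\varphi(k)|\,(\mathbf{Var}\{U(k)\})^{1/2}\,dk\bigr)^2$. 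A second weighted Cauchy--Schwarz using that $\int (1+|k|)^{-2(1+\delta)}\,dk<\infty$ shows that the lemma reduces to the uniform estimate
\begin{equation*}
\mathbf{Var}\{U(k)\} \leq C(1+|k|)^{2}, \qquad k\in\mathbb{R},
\end{equation*}
with $C$ independent of $n$ and $m$.

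For this key estimate I would use the martingale decomposition $U^\circ(k) = \sum_{\alpha=1}^m D_\alpha$, where $D_\alpha := (\mathbf{E}_{\leq\alpha}-\mathbf{E}_{\leq\alpha-1})U(k)$ and $\mathbf{E}_{\leq\alpha}$ denotes conditional expectation given $\mathbf{y}_1,\dots,\mathbf{y}_\alpha$. Setting $M_n^{(\alpha)} := M_n - \tau_\alpha\mathbf{y}_\alpha\otimes\mathbf{y}_\alpha$ and $U^{(\alpha)}(k) := \Tr e^{-ikM_n^{(\alpha)}}$, independence of $U^{(\alpha)}$ from $\mathbf{y}_\alpha$ gives $D_\alpha = (\mathbf{E}_{\leq\alpha}-\mathbf{E}_{\leq\alpha-1})(U(k) - U^{(\alpha)}(k))$ and martingale orthogonality yields $\mathbf{Var}\{U(k)\} = \sum_\alpha \mathbf{E}|D_\alpha|^2$. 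Duhamel's identity applied to the rank-one perturbation $M_n - M_n^{(\alpha)} = \tau_\alpha\mathbf{y}_\alpha\otimes\mathbf{y}_\alpha$ gives
\begin{equation*}
U(k) - U^{(\alpha)}(k) = -ik\tau_\alpha \int_0^1 \bigl(e^{-ik(1-s)M_n^{(\alpha)}}e^{-iksM_n}\mathbf{y}_\alpha,\mathbf{y}_\alpha\bigr)\,ds,
\end{equation*}
turning the estimate of $\mathbf{E}|D_\alpha|^2$ into a bound on the conditional fluctuation of a quadratic form in $\mathbf{y}_\alpha$.

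The decisive ingredient is that isotropy, the unconditional symmetry, and the moment hypothesis (\ref{a22L}) imply that for every Hermitian matrix $A$ independent of $\mathbf{y}_\alpha$,
\begin{equation*}
\mathbf{E}\{(A\mathbf{y}_\alpha,\mathbf{y}_\alpha)\} = n^{-1}\Tr A, \qquad \mathbf{Var}\{(A\mathbf{y}_\alpha,\mathbf{y}_\alpha)\} \leq C n^{-1}\|A\|^2,
\end{equation*}
where the constant depends only on the error terms in $a_{2,2}$ and on $\kappa_4$. The deterministic bulk $n^{-1}\Tr A$ is annihilated by the martingale difference, so only the centered fluctuation survives, and combined with $\|e^{-iksM_n^{(\alpha)}}\| = \|e^{-iksM_n}\| = 1$ this yields $\mathbf{E}|D_\alpha|^2 \leq Ck^2\tau_\alpha^2/n$. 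Summing over $\alpha$, using (\ref{nmc}) and $\int \tau^2\,d\sigma_m \leq (\int \tau^4\,d\sigma_m)^{1/2}$ controlled by (\ref{m4}), delivers the required $O((1+|k|)^2)$ bound on $\mathbf{Var}\{U(k)\}$.

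The main obstacle is that inside the Duhamel integral the operator $e^{-iksM_n}$ itself depends on $\mathbf{y}_\alpha$, so the quadratic-form fluctuation bound cannot be applied directly to $(\ldots\mathbf{y}_\alpha,\mathbf{y}_\alpha)$. Decoupling $M_n$ from $\mathbf{y}_\alpha$ via a Sherman--Morrison type expansion for $e^{-iksM_n}$ (in which the non-negativity $\tau_\alpha\geq 0$ keeps the accompanying denominators under control) produces residual terms that must be shown to be absorbed in the same $O(k^2\tau_\alpha^2/n)$ budget. It is at this step that the quantitative hypotheses (\ref{a22L}) and the finite fourth-moment assumption (\ref{m4}) on $\sigma$ enter in an essential way.
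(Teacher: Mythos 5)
Your plan is structurally parallel to the paper's, but you have replaced the two main tools with different ones and the replacement breaks at a crucial point. The paper uses Shcherbina's Poisson-kernel inequality (Proposition \ref{p:varNf}) to pass from the variance of the resolvent trace $\gamma_n(z)=\Tr G(z)$, $\Im z=\eta>0$, to the variance of $\mathcal{N}_n[\varphi]$; you instead use Fourier inversion and the characteristic trace $U(k)=\Tr e^{-ikM_n}$. Both routes then run a martingale decomposition over $\mathbf{y}_1,\dots,\mathbf{y}_m$ and want a $1/n$ gain from the fluctuation of a quadratic form in $\mathbf{y}_\alpha$. The decisive difference is what happens inside each martingale difference. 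For the resolvent, the rank-one Sherman--Morrison identity gives the \emph{exact} closed formula (\ref{g-ga}), $\gamma_n-\gamma_n^\alpha=-B_{\alpha n}/A_{\alpha n}$, in which both $A_{\alpha n}=1+\tau_\alpha(G^\alpha\mathbf{y}_\alpha,\mathbf{y}_\alpha)$ and $B_{\alpha n}=\tau_\alpha((G^\alpha)^2\mathbf{y}_\alpha,\mathbf{y}_\alpha)$ are quadratic forms whose matrices $G^\alpha,(G^\alpha)^2$ are already independent of $\mathbf{y}_\alpha$, and moreover $|B_{\alpha n}/A_{\alpha n}|\le 1/|\Im z|$ deterministically (\ref{B/A<}). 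This lets the paper turn the \emph{second}-moment hypothesis (\ref{a22L}) into the required $O(n^{-1})$ per-difference bound (\ref{VarA})--(\ref{VarB}), with the resolvent's built-in regularization absorbing everything else; no higher moments of $(A_n\mathbf{y},\mathbf{y})^\circ$ are needed at this stage.

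Your Duhamel representation has no such decoupling. There is no Sherman--Morrison analogue for $e^{-ikM_n}$: expanding $e^{-iksM_n}$ around $e^{-iksM_n^{(\alpha)}}$ (iterating Duhamel, i.e.\ the Dyson series) produces terms of the form $(k\tau_\alpha)^j$ times $j$-fold products of quadratic forms $(e^{-iktM_n^{(\alpha)}}\mathbf{y}_\alpha,\mathbf{y}_\alpha)$. The leading ($j=1$) term is fine and gives $Ck^2\tau_\alpha^2/n$, but already the $j\ge 2$ terms have magnitude $O((k\tau_\alpha)^j\|\mathbf{y}_\alpha\|^{2j})$, and to extract the required $1/n$ from their \emph{centered} fluctuation you need control of $\mathbf{E}\{|(A\mathbf{y},\mathbf{y})^\circ|^4\}$, i.e.\ a bound like (\ref{m4Ayy}). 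That bound is part of the ``very good'' Definition \ref{d:very}, not of the hypotheses of Lemma \ref{l:Var}, which only assume (\ref{a22L}). In addition, the raw Dyson series does not obviously give the polynomial bound $\mathbf{Var}\{U(k)\}\le C(1+|k|)^2$ that your Cauchy--Schwarz reduction needs: term-by-term you get factors growing in $k$ and $\tau_\alpha$, and exploiting the cancellations that make $|U-U^{(\alpha)}|\le|k|\tau_\alpha\|\mathbf{y}_\alpha\|^2$ uniformly in $k$ requires an argument you have not supplied. (Note also that even the paper does not prove a $k$-uniform polynomial bound of that strength: its resolvent bound (\ref{var1}) degenerates as $\eta\downarrow 0$, which is why the refined estimate (\ref{vsum}) is needed in combination with the $\eta$-integration in (\ref{varNf}).) So the ``decoupling of $M_n$ from $\mathbf{y}_\alpha$ via a Sherman--Morrison type expansion for $e^{-iksM_n}$'' that you flag as the main obstacle is indeed the gap: under the hypotheses of Lemma \ref{l:Var} it is not clear how to close it without the extra fourth-moment input, and the paper's route through the resolvent was designed precisely to avoid it.
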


The proof of the lemma is given in Section \ref{s:var}. It turns out
however that the validity of the CLT, more exactly, its proof in
this paper, requires more conditions on the components of random
vectors $\{y_\alpha\}_{\alpha=1}^m$ in (\ref{Mnm}). Namely, we
introduce

\begin{definition}
\label{d:very} A random isotropic vector $\mathbf{y}=\{y_j\}_{j=1}^n \in\mathbb{%
R}^n$ is called \textit{very good} if its distribution is unconditional,
there exist $n$-independent $a,b\in\mathbb{R}$ such that (cf. (\ref{a22L}))
\begin{align}
& a_{2,2}:=\mathbf{E}\{y_{ j}^2y_{ k}^2\}=n^{-2}+an^{-3}+o(n^{-3}),\quad
j\neq k, n\rightarrow\infty, \label{a22} \\
&\kappa_{4}:=\mathbf{E}\{y_{ j}^4\}-3a_{2,2}=bn^{-2}+o(n^{-2}),\quad
n\rightarrow\infty,  \label{k4}
\end{align}
and
\begin{equation}
\mathbf{E}\{|(A_n\mathbf{y},\mathbf{y})^{\circ}|^4\}\leq ||A_{n}||^{4}%
\widetilde{\delta}_{n},\quad \widetilde{\delta}_{n}=O(n^{-2}),\;n\rightarrow%
\infty,  \label{m4Ayy}
\end{equation}
for any $n\times n$ complex matrix $A_n$ which does not depend on $\mathbf{y}
$.
\end{definition}

It is easy to check that the vectors
$\mathbf{y}=\mathbf{x}/n^{1/2}$, where $\mathbf{x}$ has i.i.d.
components with even distribution and such that
$\mathbf{E}\{x_j^8\}< \infty$, are very good as well as vectors
uniformly distributed over the unit ball or unit sphere of
$\mathbb{R}^n$. For other examples of very good random vectors of
geometric origin see Section \ref{s:vectors}.

\begin{remark}
Here instead of unconditionality one can assume that $\mathbf{y}$
satisfies condition (\ref{yyyy}) below (like it was assumed in \cite{Ma-Pa:67}).
\end{remark}
% where constants $a$, $b$ do not depend on $j$, $k$, $n$.
Now we are ready to formulate our main result:
\begin{theorem}
\label{t:clt} Let $m$ and $n$ be positive integers satisfying (%
\ref{nmc}), $\{\mathbf{y}_{\alpha }\}_{\alpha =1}^{m}$ be i.i.d.
very good vectors in the sense of Definition \ref{d:very} and
$\{\tau _{\alpha }\}_{\alpha =1}^{m}$ be
non-negative numbers satisfying (\ref{sigma}) and (\ref{m4}).
Consider the corresponding random matrix $M_{n}$ of (\ref{Mnm}) and
a linear statistic $\mathcal{N}_{n}[\varphi ]$ (\ref{Nn}) of its
eigenvalues. Assume that $\varphi:\mathbb{R}\rightarrow\mathbb{R}, \;\varphi \in H_{2+\delta }, \; \delta >0$ (see (\ref{Hs})).
 Then $\mathcal{N}_{n}^{\circ }[\varphi ]$ of (\ref{Nn0})
converges in distribution to the Gaussian random variable with zero
mean and variance $V[\varphi ]=\lim_{y\downarrow 0}V_{y}[\varphi
]$, where
%  \begin{equation}
%  V[\varphi]=\lim_{y\downarrow 0}V_y[\varphi],\quad
% V_y[\varphi] =\frac{1}{2\pi^2}\int\int\Re\big[C(z_{1},
%  \overline{z_{2}})-C(z_{1},z_{2})\big]\varphi(\lambda_{1})\varphi(\lambda_{2})d\lambda_{1}d\lambda_{2}\;,\label{Var}
%  \end{equation}
\begin{align}
& V_{\eta }[\varphi ]=\frac{1}{2\pi ^{2}}\int \int \Re \big[C(z_{1},%
\overline{z_{2}})-C(z_{1},z_{2})\big]\varphi (\lambda _{1})
{\varphi (\lambda
_{2})}d\lambda _{1}d\lambda _{2}\;,  \label{Var} \\
& C(z_{1},z_{2})=\frac{\partial ^{2}}{\partial z_{1}\partial z_{2}}\bigg(%
2\log \frac{\Delta f}{\Delta z}-(a+b)f(z_{1})f(z_{2})\frac{\Delta z}{\Delta f}%
\bigg),  \label{Cov}
\end{align}%
with $z_{1,2}=\lambda _{1,2}+i\eta $, $\Delta f=f(z_{1})-f(z_{2})$, $\Delta
z=z_{1}-z_{2}$, and $f$ is given by (\ref{MPE}).
\end{theorem}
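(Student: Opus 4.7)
The approach is the characteristic-function technique developed in \cite{Ly-Pa:08} and used in Chapter 17 of \cite{Pa-Sh:11}, adapted to the present setting where the components of $\mathbf{y}_\alpha$ need not be independent. First, I would reduce the problem to the study of the centered trace of the resolvent
\begin{equation*}
\gamma _{n}^{\circ }(z)=\Tr\,G_{n}(z)-\mathbf{E}\{\Tr\,G_{n}(z)\},\qquad G_{n}(z)=(M_{n}-zI)^{-1}.
\end{equation*}
Indeed, for $\varphi \in H_{2+\delta }$ one has the representation $\mathcal{N}_{n}^{\circ }[\varphi ]=-\pi ^{-1}\Im \int \varphi (\lambda )\gamma _{n}^{\circ }(\lambda +i\eta )d\lambda +r_{n}(\eta )$, where the remainder $r_n(\eta)$ is controlled at small $\eta$ by Lemma \ref{l:Var} combined with the standard Poisson-kernel estimate on $H_{2+\delta}$ (the extra smoothness $\delta>0$ is exactly what makes the error tend to zero as $\eta \downarrow 0$). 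It therefore suffices to prove that, for any fixed $\eta >0$, the random field $\gamma _{n}^{\circ }(z)$ on $\Im z = \eta$ converges in finite-dimensional distributions to a centered complex Gaussian process with covariance given by (\ref{Cov}).

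To prove this Gaussian convergence I would study the characteristic function $Z_{n}(x,z)=\mathbf{E}\{e_n(x)\gamma _{n}^{\circ }(z)\}$ where $e_n(x) = e^{ix\mathcal{N}_n^\circ[\varphi]}$, and derive a closed differential--type relation for it. The key computation is $\mathbf{E}\{e_n(x)\Tr\,G_{n}\}$, which one transforms by the identity $z\Tr\,G_n = \sum_{\alpha}\tau _{\alpha }(G_{n}\mathbf{y}_{\alpha },\mathbf{y}_{\alpha })-n$ and the Sherman--Morrison formula
\begin{equation*}
\tau _{\alpha }(G_{n}\mathbf{y}_{\alpha },\mathbf{y}_{\alpha })=1-\frac{1}{1+\tau _{\alpha }(G_{n}^{(\alpha )}\mathbf{y}_{\alpha },\mathbf{y}_{\alpha })},
\end{equation*}
so that the conditional law of $(G_{n}^{(\alpha )}\mathbf{y}_{\alpha },\mathbf{y}_{\alpha })$ can be analysed using the hypotheses on $\mathbf{y}_\alpha$. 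The unconditional distribution allows one to split quadratic forms $(A\mathbf{y},\mathbf{y})$ into diagonal and off-diagonal contributions; (\ref{a22}) and (\ref{k4}) then give expansions of $\mathbf{E}\{(A\mathbf{y},\mathbf{y})(B\mathbf{y},\mathbf{y})\}$ to order $n^{-3}$ with explicit $a$- and $b$-coefficients, while (\ref{m4Ayy}) and the a priori bound (\ref{apriory}) provide $L^{4}$-concentration for all the extra error terms. Iterating this computation twice and using Theorem \ref{t:MPP} to replace $\mathbf{E}\{g_n\}$ by $f$ in the leading order, I would arrive at
\begin{equation*}
Z_{n}(x,z)=-\,xV_{\eta,z}[\varphi ]\, \mathbf{E}\{e_n(x)\}+o(1),\quad n\to\infty ,
\end{equation*}
for some explicit $V_{\eta ,z}[\varphi ]$, locally uniformly in $x$. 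The Helmholtz-type contour integration against $\varphi $ then yields the ODE $Z_{n}'(x)=-xV_{\eta }[\varphi ]Z_{n}(x)+o(1)$, whose unique solution with $Z_{n}(0)=1$ is $e^{-x^{2}V_\eta[\varphi]/2}$. Passing $\eta \downarrow 0$ and using the $H_{2+\delta }$ smoothness completes the argument and identifies the variance as (\ref{Var}).

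The hard part, as usual, is the bookkeeping of error terms in the cumulant-like expansion of $\mathbf{E}\{(A\mathbf{y},\mathbf{y})\Phi (\mathbf{y})\}$ when the components of $\mathbf{y}$ are dependent: one cannot invoke the Gaussian or Poincar\'e integration-by-parts, and the only tools are the unconditional symmetry together with the precise moment asymptotics (\ref{a22})--(\ref{m4Ayy}). The delicate point is that the $a+b$ coefficient in (\ref{Cov}) arises \emph{only} from the $O(n^{-3})$ correction in $a_{2,2}$ and the $\kappa _{4}$ term, so these corrections must be tracked through all intermediate identities while every other error must be shown to be $o(1)$; the bound (\ref{m4Ayy}) together with $\|G_{n}(z)\|\le (\Im z)^{-1}$ is what allows the Cauchy--Schwarz estimates to close. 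A secondary technical step is justifying that the limiting covariance $V_{\eta }[\varphi ]$ has a limit as $\eta \downarrow 0$, which again uses $\varphi \in H_{2+\delta }$ and the regularity of $f$ on $\mathbb{C}\setminus \mathbb{R}$ obtained from (\ref{MPE}) under the finite fourth moment condition (\ref{m4}).
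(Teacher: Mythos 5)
Your plan follows essentially the same route as the paper: the characteristic-function/ODE method, Poisson-kernel smoothing of $\varphi$ controlled by Lemma \ref{l:Var} and the $H_{2+\delta}$ norm, the Sherman--Morrison rank-one resolvent formula, and iterated expansion of $A_{\alpha n}^{-1}$ around $\mathbf{E}\{A_{\alpha n}\}$ with the moment asymptotics (\ref{a22})--(\ref{m4Ayy}), culminating in $Z_n'(x)=-xV_\eta[\varphi]Z_n(x)+o(1)$. One step your sketch passes over: extracting the $\kappa_4$-contribution to (\ref{Cov}) requires showing $n^{-1}\sum_j G_{jj}(z_1)G_{jj}(z_2)\to f(z_1)f(z_2)$ (Lemma \ref{l:g12}), which is not a direct consequence of Theorem \ref{t:MPP} and needs its own rank-one expansion argument.
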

\begin{remark}
\label{r:covgg} Note that in fact
\begin{align*}
C(z_1,z_2)=\lim_{n\rightarrow\infty}\mathbf{E}\{\gamma_{n}^{\circ}(z_1)%
\gamma_{n}(z_2)\},
\end{align*}
where $\gamma_{n}(z)=\Tr (M_{n}-zI_n)^{-1}$ and
$\gamma_{n}^{\circ}=\gamma_{n}^{\circ}-\mathbf{E}\{\gamma_{n}\}$.
\end{remark}
\begin{remark}
The condition $\tau_\alpha\geq 0$, $\alpha=1,...,m$, is a pure technical
one, it can be shown that the results remain valid for $\tau_\alpha\in%
\mathbb{R}$.
\end{remark}
\begin{remark}
One can also rewrite $V_{\eta }[\varphi ]$ in the form
\begin{align}\label{Var2}
V_{\eta }[\varphi ]=&\frac{1}{2\pi ^{2}}\int \int \Re \big[L(z_{1},{z_{2}}%
)-L(z_{1}\overline{z_{2}})\big](\varphi (\lambda _{1})-\varphi (\lambda
_{2}))^{2}d\lambda _{1}d\lambda _{2}   \notag\\
\; &+ \frac{(a+b)c}{\pi ^{2}}\int_{0}^{\infty }\tau ^{2}d\sigma (\tau )\bigg(%
\Im \int \frac{f^{\prime }(z_{1})}{(1+\tau f(z_{1}))^{2}}\varphi
(\lambda _{1})d\lambda _{1}\bigg )^{2},
\end{align}
\begin{align}
L(z_{1},z_{2})=\frac{{\partial ^{2}}}{\partial z_{1}\partial
z_{2}}\log \frac{\Delta f}{\Delta z}. \notag
\end{align}%
%where $L(z_{1},z_{2})={\partial ^{2}}\ln \frac{\Delta f}{\Delta z}/{\partial
%z_{1}\partial z_{2}}$.
In particular, if $\tau _{\alpha }=1$,
$\alpha =1,...,m$, then
\begin{align}
V[\varphi ]|_{\tau =1}=\frac{1}{2\pi ^{2}}\int_{a_{-}}^{a_{+}}%
\int_{a_{-}}^{a_{+}}& \left(\frac{\triangle \varphi }{\triangle \lambda }%
\right) ^{2}\frac{(4c-(\lambda _{1}-a_{m})(\lambda
_{2}-a_{m}))d\lambda
_{1}d\lambda _{2}}{\sqrt{(a_{+}-\lambda _{1})(\lambda _{1}-a_{-})}\sqrt{%
(a_{+}-\lambda _{2})(\lambda _{2}-a_{-})}}  \notag \\
& +\frac{a+b}{4c\pi ^{2}}\left( \int_{a_{-}}^{a_{+}}\varphi (\mu )\frac{\mu
-a_{m}}{\sqrt{(a_{+}-\mu )(\mu -a_{-})}}d\mu \right) ^{2},  \label{VSC}
\end{align}%
where $\triangle \varphi =\varphi (\lambda _{1})-\varphi (\lambda _{2})$, $%
a_{\pm }=(1\pm \sqrt{c})^{2}$, $a_{m}=1+c$. This expression coincides with
that one for the limiting variance of linear eigenvalue statistics of sample
covariance matrices (see \cite{Ly-Pa:08}), in which the fourth cumulant of
matrix entries is replaced with $a+b$.
\end{remark}

The paper is organized as follows. Section \ref{s:vectors} presents some
facts on the isotropic random vectors with a log-concave unconditional
symmetric distribution. In Section \ref{s:var} we prove Lemma \ref{l:Var}.
Section \ref{s:main} presents the proof of our main result, Theorem \ref%
{t:clt}. Section \ref{s:aux} contains auxiliary results.

\section{ Isotropic random vectors with log-concave distribution}
\label{s:vectors} Let $\mathbf{y}\in\mathbb{R}^n$ be a random
isotropic\ vector with a log-concave density (see Definition
\ref{d:isotropic}). A  typical example from convex geometry is
vector uniformly distributed over a convex body in $\mathbb{R}^n$.
The study of the concentration of the Euclidean norm of
$\mathbf{y}$ around its average is a part of an important branch of
high dimensional convex geometry related to a famous conjecture of
Kannan, Lov\'{a}sz and Simonovits \cite{KLS:95} (see also the
surveys \cite{G:13, Vemp:05}) on the validity of Poincar\'{e}
type inequality.

In particular, the so-called thin shell conjecture claims that
$$
\mathbf{P}\left\{\left| \ ||\mathbf{y}||-1\right|>t\right\}\leq 2
\exp(-ct\sqrt n)
$$
where $c>0$ is a universal constant. A weaker conjecture, known as
the variance conjecture, claims that
\begin{equation}
\label{equ:variance conject} \mathbf{Var}\{||\mathbf{y}||^2\} \le C/n,
\end{equation}
where $C$ is a universal constant. The conjecture in full generality
is still open.

The first breakthrough was obtained by \cite{FGP:06, Kl2:07} where
the bound
\begin{equation}\label{beta}
\mathbf{Var}\{||\mathbf{y}||^2\} = o(1), \; n \to \infty
\end{equation}
was proved. The bound is the basic tool to prove Berry-Ess\'{e}en
type inequalities for one-dimensional marginals of $\mathbf{y}$
\cite{ABP:03, Kl2:07, Bob:04} and is sometimes called CLT for convex
bodies. The best known improvement of (\ref{beta}) by now  is
\cite{GM:11}
$$\mathbf{Var}\{||\mathbf{y}||^2\}= O(n^{-1/3}),\  n\to\infty. $$

The variance conjecture  (\ref{equ:variance conject}) has been
proved in certain special cases. Anttila et al. \cite{ABP:03}
considered random isotropic vectors uniformly distributed over
 the unit ball $B_p^n$ of the $\ell_p^n$ norm in $\mathbb{R}^n$
 and Wojtaszczyk \cite{Woj:07} considered the same setting for
a generalized Orlicz unit ball.  Klartag \cite{Kl:09} studied
vectors with the log-concave unconditional isotropic distribution
(see Definitions \ref{d:isotropic} and \ref{d:unc}).

While in high dimensional convex geometry one focuses mainly on
{\em quantitative} estimates as above, in this  paper, we will also
need  precise asymptotics
for mixed moments of the components of $\mathbf{y}$.  This raises
new questions in high dimensional convex geometry, related to
general quadratic forms rather than for norms. More precisely, let
$A_{n}$ be a $n\times n$ complex matrix such that $\|A_{n}\|\le 1$.
It was proved in \cite{Pa-Pa:09} that
$$
\mathbf{Var}\{(A_n\mathbf{y},\mathbf{y})\}= o(1),\  n\to\infty.
$$
According to Lemma \ref{l:Var}, we need the best possible bound,
i.e., an analog of (\ref{equ:variance conject}) for quadratic forms.
It is for instance known when $\mathbf{y}$ is uniformly distributed
over the unit ball $B_p^n$ and follows from the corresponding
Poincar\'e type estimates \cite{LW:08, S:08}. We prove below in
Lemma \ref{l:vectors} that the analog (\ref{VarL}) is valid for any
random vector with a log-concave unconditional symmetric isotropic
distribution.

To prove the CLT (see Theorem \ref{t:clt}), we will need precise
asymptotics for the mixed moments of components of $\mathbf{y}$ (see
(\ref{a22L}) and (\ref{a22}) -- (\ref{k4})), as well as more bounds
for quadratic forms (see (\ref{m4Ayy})). Given the parameters
$a_{2,2}$ and $\kappa_4$, we are considering in fact a sequence of
$n$-dimensional log-concave isotropic distributions satisfying
(\ref{a22L}) or (\ref{a22}) --
 (\ref{k4}).  From a geometric point of view, one may consider a
sequence of $n$-dimensional convex bodies, such as unit balls of
norms, and their uniform distributions (normalized to be isotropic).
A natural example is given by the sequence of the unit balls
$\{B_p^n\}_{n \in \mathbb{N}}$ for which we check that (\ref{a22L})
and (\ref{a22}) -- (\ref{k4})) are valid with $a$ and $b$ depending
only on $p$ (see (\ref{abp})). As for the general case, we have
\begin{lemma}
\label{l:vectors} If $\mathbf{y}=(y_1,...,y_n)\in\mathbb{R}^n$ is an
isotropic vector with a log-concave unconditional symmetric distribution,
then it satisfies (\ref{a22L}) and (\ref{good}) with $\delta_n=O(n^{-1})$, $%
n\rightarrow\infty$, and (\ref{m4Ayy}).
\end{lemma}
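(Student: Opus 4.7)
The lemma bundles together three assertions about $\mathbf{y}$: the moment asymptotics \eqref{a22L}, the quadratic variance bound \eqref{good} with $\delta_n=O(n^{-1})$, and the fourth moment bound \eqref{m4Ayy}. My plan is to derive them from three structural ingredients for log-concave measures: Borell's lemma, which together with isotropy yields $\mathbf{E}\{y_j^{2k}\}=O(n^{-k})$ for every fixed $k$; the variance estimate $\mathbf{Var}\{\|\mathbf{y}\|^{2}\}=O(n^{-1})$ of Klartag \cite{Kl:09} for unconditional log-concave isotropic vectors; and Klartag's Poincar\'e-type inequality for such measures, with Poincar\'e constant $C_{P}=O(n^{-1})$ in the present normalization.

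\textbf{Step 1: The moment asymptotics \eqref{a22L}.} Borell's lemma immediately gives $\mathbf{E}\{y_{j}^{4}\}=O(n^{-2})$, whence $\kappa_{4}=\mathbf{E}\{y_{j}^{4}\}-3a_{2,2}=O(n^{-2})$ as soon as $a_{2,2}=O(n^{-2})$. For the sharper expansion $a_{2,2}=n^{-2}+O(n^{-3})$, I expand
\begin{equation*}
\mathbf{E}\{\|\mathbf{y}\|^{4}\}=\sum_{j}\mathbf{E}\{y_{j}^{4}\}+\sum_{j\neq k}\mathbf{E}\{y_{j}^{2}y_{k}^{2}\},
\end{equation*}
use $\mathbf{E}\{\|\mathbf{y}\|^{2}\}=1$ together with Klartag's variance estimate to write $\mathbf{E}\{\|\mathbf{y}\|^{4}\}=1+O(n^{-1})$, and observe that $\sum_{j}\mathbf{E}\{y_{j}^{4}\}=O(n^{-1})$ by Borell. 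This yields $\sum_{j\neq k}\mathbf{E}\{y_{j}^{2}y_{k}^{2}\}=1+O(n^{-1})$. Invoking the symmetric (permutation-invariant) structure of the distribution, all $n(n-1)$ summands coincide with the common value $a_{2,2}$, so $a_{2,2}=(1+O(n^{-1}))/(n(n-1))=n^{-2}+O(n^{-3})$.

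\textbf{Step 2: The variance bound \eqref{good}.} Splitting $A_{n}$ into real and imaginary parts it suffices to handle a real $B$ with $\|B\|\leq\|A_{n}\|$. The gradient of the function $\mathbf{y}\mapsto(B\mathbf{y},\mathbf{y})$ equals $(B+B^{T})\mathbf{y}$, so by isotropy
\begin{equation*}
\mathbf{E}\{\|(B+B^{T})\mathbf{y}\|^{2}\}=n^{-1}\|B+B^{T}\|_{HS}^{2}\leq n^{-1}\cdot n\,\|B+B^{T}\|^{2}\leq 4\|A_{n}\|^{2}.
\end{equation*}
Klartag's Poincar\'e inequality with $C_{P}=O(n^{-1})$ then gives $\mathbf{Var}\{(B\mathbf{y},\mathbf{y})\}\leq 4C_{P}\|A_{n}\|^{2}=O(\|A_{n}\|^{2}/n)$, which is precisely \eqref{good} with $\delta_{n}=O(n^{-1})$.

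\textbf{Step 3 and main obstacle.} For \eqref{m4Ayy} I expand $((A_{n}\mathbf{y},\mathbf{y})^{\circ})^{4}$ as an $8$-linear form in the components of $\mathbf{y}$. Unconditionality kills every monomial in which some index appears an odd number of times, while Borell's lemma gives $|\mathbf{E}\{y_{i_{1}}^{2k_{1}}\cdots y_{i_{r}}^{2k_{r}}\}|=O(n^{-(k_{1}+\cdots+k_{r})})$. A careful pairing of the surviving monomials reduces the matrix-entry factor to quantities controlled by $\|A_{n}\|_{HS}^{4}\leq n^{2}\|A_{n}\|^{4}$, and the resulting bound is $\mathbf{E}\{|(A_{n}\mathbf{y},\mathbf{y})^{\circ}|^{4}\}=O(\|A_{n}\|_{HS}^{4}\,n^{-4})=O(\|A_{n}\|^{4}n^{-2})$; alternatively one can iterate the Poincar\'e inequality via Aida--Stroock / Bobkov--Ledoux type estimates to bootstrap the $L^{2}$ bound of Step 2 to the required $L^{4}$ bound. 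The main technical hurdles are: (i) verifying that the Poincar\'e constant provided by \cite{Kl:09} is genuinely $O(n^{-1})$ (rather than $O((\log n)^{C}/n)$) for the full class of log-concave unconditional symmetric isotropic measures, and (ii) justifying the permutation-invariance step in the derivation of $a_{2,2}$, or replacing it by a direct uniform-in-$(j,k)$ estimate when only sign-symmetry is available; these are the delicate points that separate the clean qualitative statement from the quantitative rates claimed in the lemma.
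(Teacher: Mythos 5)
Your Step 1 is essentially the paper's argument: expand $\mathbf{E}\{\|\mathbf{y}\|^4\}$, use the unconditional variance bound $\mathbf{Var}\{\|\mathbf{y}\|^2\}\le C/n$ from \cite{Kl:09}, bound $\mathbf{E}\{y_j^4\}=O(n^{-2})$ by a reverse H\"older inequality for one-dimensional log-concave marginals, and invoke permutation invariance (the ``symmetric'' hypothesis) to identify the $n(n-1)$ off-diagonal fourth moments. That part is fine.

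For Steps 2 and 3 you have taken a genuinely different and, unfortunately, shakier route than the paper. The paper does \emph{not} use any Poincar\'e-type inequality for (\ref{good}). Instead it observes that unconditionality pins down the fourth mixed moments completely: (\ref{yyyy}) gives $\mathbf{E}\{y_jy_ky_py_q\}= a_{2,2}(\delta_{jk}\delta_{pq}+\delta_{jp}\delta_{kq}+\delta_{jq}\delta_{kp})+\kappa_4\delta_{jk}\delta_{jp}\delta_{jq}$, whence the \emph{exact} identity
\begin{equation*}
\mathbf{Var}\{(A_n\mathbf{y},\mathbf{y})\}=(a_{2,2}-n^{-2})|\Tr A_n|^2+2a_{2,2}\Tr|A_n|^2+\kappa_4\sum_{j}|A_{njj}|^2,
\end{equation*}
and the $O(n^{-1}\|A_n\|^2)$ bound is then immediate from (\ref{a22L}) and $|\Tr A_n|\le n\|A_n\|$, $\Tr|A_n|^2\le n\|A_n\|^2$. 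This is fully elementary and requires only the moment asymptotics you already proved, not a spectral gap. Your Poincar\'e route would indeed give the conclusion \emph{if} $C_P=O(n^{-1})$ in the present normalization, but that is precisely the part you should not take for granted: for general unconditional log-concave isotropic measures the known spectral-gap bound (due to Klartag) carries polylogarithmic factors, so as written your Step 2 proves only $\delta_n=O((\log n)^{C}/n)$, not the clean $O(n^{-1})$ the lemma asserts. The ``main technical hurdle (i)'' you flag is therefore a real gap, not a loose end.

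For Step 3, the paper avoids both of your proposed routes. It invokes Bourgain's dimension-free Khinchine--Kahane inequality for polynomials on log-concave measures (see \cite{Bou:91, Bob:00}): for a polynomial $P_d$ of degree $d$ one has $\mathbf{E}\{|P_d(\mathbf{y})|^q\}\le C(d,q)\,\mathbf{E}\{|P_d(\mathbf{y})|\}^q$. Applied with $d=2$, $q=4$ to $P_2=(A_n\mathbf{y},\mathbf{y})^\circ$ this gives at once $\mathbf{E}\{|(A_n\mathbf{y},\mathbf{y})^\circ|^4\}\le C\,\mathbf{Var}\{(A_n\mathbf{y},\mathbf{y})\}^2$, and (\ref{m4Ayy}) follows from the already-established $O(n^{-1}\|A_n\|^2)$ variance bound. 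Your combinatorial expansion is considerably more laborious and would itself need log-concavity somewhere to control eight-linear moment growth uniformly in $n$, while your Poincar\'e bootstrap again inherits the unresolved constant issue. So the missing tool is Bourgain's polynomial moment comparison: it is the one ingredient that makes both Step 2 (via its corollary $\mathbf{E}\{y_j^4\}\le Cn^{-2}$ feeding the exact variance formula) and Step 3 clean, and it is what the paper actually uses.
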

\begin{proof}
We will use the dimension free Khinchine-Kahane-type inequality
%for polynomials
%with respect to the  uniform  distribution  on  convex  sets proved
by Bourgain \cite{Bou:91} (see also \cite{Bob:00}):  if $P_d$ is a
polynomial of degree $d$, and  $\mathbf{y}\in\mathbb{R}^n$  has a
log-concave distribution, then
\begin{equation}
\mathbf{E}\{|P_d(\mathbf{y})|^q\}\leq
C(d,q)\mathbf{E}\{|P_d(\mathbf{y})|\}^q,
\label{bou}
\end{equation}
where $C(d,q)$ depends only on $d$ and $q$ and does not depend on
$n$. By using (\ref{bou}) and (\ref{isotropic}), we obtain for the
fourth moments of coordinates $\{y_j\}_{j=1}^n$ of $\mathbf{y}$:
%isotropic vectors with a log-concave distribution:
 \begin{equation}
  \mathbf{E}\{|y_j|^4\}\leq C
  \mathbf{E}\{y_j^2\}^{2}=C n^{-2}.
 \label{bor}
 \end{equation}
If the distribution of $\mathbf{y}$ is symmetric, then
 \begin{equation}
 a_{2,2}=\frac{1}{n-1}\mathbf{E}\Big\{\sum_{k=1}^ny_{ j}^2y_{ k}^2-y_{ j}^4\Big\}=
 \frac{1}{n(n-1)}\mathbf{E}\{||\mathbf{y}||^4\}-\frac{1}{n-1}\mathbf{E}\{y_{j}^4\}.
 \label{a22=}
 \end{equation}
It follows from (\ref{equ:variance conject}) and (\ref{isotropic})
that
  \begin{equation}
  \mathbf{E}\{||\mathbf{y}||^4\}=1+ \mathbf{Var}\{||\mathbf{y}||^2\} \leq 1+ C/n.
  \label{y4}
 \end{equation}
 This and (\ref{a22=}) yield
 \begin{equation}
  a_{2,2} \le n^{-2} + C/n^{3}.
  \label{a22<}
 \end{equation}
 On the other hand $\mathbf{E}\{||\mathbf{y}||^4\}\geq\mathbf{E}\{||\mathbf{y}||^2\}^{2}=1$, which together with  (\ref{bor}) and (\ref{a22=}) lead to
$a_{2,2}\geq n^{-2}+C'/n^{3}$,
 % \label{a22<}
 %\end{equation*}
 and we get the first part of  (\ref{a22L}). The second part follows from  the first one and (\ref{bor}).

Since for any random $\mathbf{y}=\{y_j\}_{j=1}^n$ with unconditional
distribution
\begin{equation}
 \mathbf{E}\{y_{j}y_{k}y_{p}y_{q}\}=
 a_{2,2}(\delta_{jk}\delta_{pq}+\delta_{jp}\delta_{kq}+\delta_{jq}
 \delta_{kp})+\kappa_{4}\delta_{jk}\delta_{jp}\delta_{jq},
\label{yyyy}
 \end{equation}
we have for a symmetric matrix $A_n$
 \begin{align}
 \mathbf{Var}\{(A_n\mathbf{y},\mathbf{y})\}
 %&=(a_{2,2}-n^{-2})|\Tr A_n|^2+2a_{2,2}\Tr |A_{n}|^2+\kappa_{4}\sum_{j=1}^n %|A_{njj}|^2
 =(a_{2,2}-n^{-2})|\Tr A_n|^2+2a_{ 2,2}\Tr |A_{n}|^2+\kappa_{4}\sum_{j=1}^n |A_{njj}|^2,\label{Var=}
 \end{align}
 where $|A_{n}|^2=A_{n}A_{n}^*$. This and
  (\ref{a22L})  lead to
  \begin{equation}
  \mathbf{Var}\{(A_n\mathbf{y},\mathbf{y})\} = O(n^{-1}).\label{VarL}
  \end{equation}
In addition, it follows from (\ref{bou})
\begin{equation}
\mathbf{E}\{|(A_n\mathbf{y},\mathbf{y})^\circ|^{4}\}\leq C\mathbf{Var}\{(A_n\mathbf{y},\mathbf{y})\}
^{2},
\label{A42}
\end{equation}
which together with (\ref{VarL}) yield (\ref{m4Ayy}).
\end{proof}
Note, however, that not too much is known on isotropic vectors
with a log-concave distribution,
which satisfy (\ref{a22}) -- (\ref{k4}), i.e.,
are very good in the sense of Definition \ref{d:very}.
Thus, it could happen that some of them do not satisfy (\ref{a22})
and/or (\ref{k4}), for instance the coefficient in front of
$n^{-3}$ and/or the coefficient in front of $n^{-2}$ would "oscillate" in $n$.
This would mean that different subsequences of
vectors can have different coefficients $a$ and $b$ in (\ref{a22}) -- (\ref%
{k4}). Correspondingly, different subsequences of characteristic
functions of $\mathcal{N}^\circ_n[\varphi]$ would have Gaussian
limits with different variances (\ref{Var}) -- (\ref{Cov}). This
situation, if it would be the case, could be compared with that of
\cite{Pa:06b}, where it was shown that the limiting forms of the
variance and the probability law of fluctuations of linear
eigenvalue statistics for certain unitary invariant matrix ensembles
depend on a sequence $n_j\rightarrow\infty$.

Here are the examples of very good vectors. It was mentioned in the
previous section that  the vectors $\mathbf{y}=n^{-1/2}\mathbf{x}$,
where components $\{x_j\}_{j=1}^n$ of  $\mathbf{x}$ are i.i.d.
random variables with an even distribution such that
%\begin{equation}
$\mathbf{E}\{x_j^2\}=1$, $\mathbf{E}\{x_j^4\}=m_4$, $\mathbf{E}\{x_j^{8}\}<\infty$
%\label{moms}
%\end{equation}
are very good with $a_{2,2}=n^{-2}$ and $\kappa_{4}=n^{-2}(m_4-3)$, thus
$a=0$ and $b=m_4-3$ in (\ref{a22}) -- (\ref{k4}).
Note that in this case Lemma \ref%
{l:vectors} is valid without the assumption of log-concave distribution of $\mathbf{y}$.

It can also be shown that the vectors uniformly distributed over the
Euclidean unit ball in $\mathbb{R}^n$ are also very good. Let us consider a
more general case, where $\mathbf{x}$ is uniformly distributed over
the unit ball
\begin{equation*}
B_p^n=\big\{\mathbf{x}\in\mathbb{R}^n:\;||\mathbf{x}||_p=\big(\sum_{j=1}^n|x_j|^p\big)^{1/p}\leq
1\big\}
\end{equation*}
%\begin{equation*}
%B_p^n=\{\mathbf{x}\in\mathbb{R}^n:\;||\mathbf{x}||_p=(\sum|x_j|^p)^{1/p}\leq 1\}
%\end{equation*}
of the space $l_p^n$. According to \cite{BGMN:05}, we have
\begin{align*}
\mathbf{E}_{B_p^n}\{f\}:&=\frac{1}{|B_p^n|}\int_{B_p^n}f(\mathbf{x})d \mathbf{x} \\
&=\frac{1}{(2\Gamma(1+1/p))^{n}} \int_0^\infty dt e^{-t}\int_{\mathbb{R}^n}d%
\mathbf{x}e^{-||\mathbf{x}||^p_p}f(\mathbf{x}(||\mathbf{x}||^p_p+t)^{-1/p}).
\end{align*}
This allows us to calculate the moments of $\mathbf{x}$ and to show
that the vector
\begin{equation*}
\mathbf{y}=\left(\frac{1}{n}\frac{B(1/p,2/p)}{B(n/p+1,2/p)}\right)^{1/2}%
\mathbf{x,}
\end{equation*}
where $B$ is the $\beta$-function, is isotropic
%vector with a log-concave unconditional symmetric
%distribution
and satisfies (\ref{a22}) -- (\ref{k4}) with
\begin{equation}\label{abp}
a=-\frac{8}{p},\quad
b=\frac{\Gamma(1/p)\Gamma(5/p)}{\Gamma(3/p)^2}-3.
\end{equation}

\section{Proof of Lemma \protect\ref{l:Var}}\label{s:var}
We will essentially follow the scheme proposed in \cite{Sh:11},
which is based on two main ingredients. The
first is an inequality that allows us to transform bounds for the variances of
the trace of resolvent of a random matrix into bounds for the variances
of linear eigenvalue statistics with a sufficiently smooth test function:
\begin{proposition}
\label{p:varNf} Let $M_{n}$ be an $n\times n$ random matrix and $\mathcal{N}%
_{n}[\varphi ]$ be a linear statistic of its eigenvalues (see (\ref{Nn})).
Then we have for any $s>0$
\begin{equation}
\mathbf{Var}\{\mathcal{N}_{n}[\varphi ]\}\leq C_{s}||\varphi
||_{s}^{2}\int_{0}^{\infty }d\eta e^{-\eta }\eta ^{2s-1}\int_{-\infty
}^{\infty }\mathbf{Var}\{\gamma _{n}(\lambda +i\eta )\}d\lambda ,
\label{varNf}
\end{equation}%
where $C_{s}$ depends only on $s$,  $||\varphi ||_{s}$ is defined in (\ref%
{Hs}), $G(z)=(M_{n}-z)^{-1}$ is the resolvent of $M_{n}$ and
\begin{equation}
\gamma _{n}(z)=\Tr G(z).  \label{gamma}
\end{equation}
\end{proposition}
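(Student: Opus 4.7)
The plan is to realize $\mathcal{N}_n[\varphi]$ as a Fourier-side pairing with the spectral exponential $\Tr e^{-ikM_n}$ and then trade the smoothness of $\varphi$ for the resolvent of $M_n$ via the Poisson kernel.

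With the Fourier convention of (\ref{Hs}), inversion gives
\[
\mathcal{N}_n[\varphi]=\frac{1}{2\pi}\int\widehat{\varphi}(k)\,\Tr e^{-ikM_n}\,dk,
\]
so that, writing $g_n(k)=[\Tr e^{-ikM_n}]^{\circ}$, one has $\mathcal{N}_n^{\circ}[\varphi]=(2\pi)^{-1}\int\widehat{\varphi}(k)g_n(k)\,dk$. A Cauchy--Schwarz inequality with the symmetric weights $(1+2|k|)^{\pm s}$ already produces $\|\varphi\|_s^2$:
\[
\mathbf{Var}\{\mathcal{N}_n[\varphi]\}\leq\frac{\|\varphi\|_s^2}{4\pi^2}\int(1+2|k|)^{-2s}\,\mathbf{E}|g_n(k)|^2\,dk.
\]

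The next step is to inject the weight $\eta^{2s-1}e^{-\eta}$ through the elementary $\Gamma$-function identity
\[
(1+2|k|)^{-2s}=\frac{1}{\Gamma(2s)}\int_0^\infty \eta^{2s-1}e^{-\eta}e^{-2|k|\eta}\,d\eta,
\]
which, after Fubini, reduces the task to estimating $\int e^{-2|k|\eta}|g_n(k)|^2\,dk$ for each $\eta>0$. Here the essential input is the Poisson-kernel Fourier pair
\[
\int e^{ik\lambda}\,\Im\gamma_n(\lambda+i\eta)\,d\lambda=\pi e^{-|k|\eta}\,\Tr e^{ikM_n},
\]
obtained termwise from $\int e^{ik\lambda}\eta/((\lambda-\lambda_j)^2+\eta^2)\,d\lambda=\pi e^{ik\lambda_j}e^{-|k|\eta}$. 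Plancherel then converts the weighted $L^2$-norm of $g_n$ on the Fourier side into an honest $L^2$-norm on the spectral side,
\[
\int e^{-2|k|\eta}|g_n(k)|^2\,dk=\frac{2}{\pi}\int|\Im\gamma_n^{\circ}(\lambda+i\eta)|^2\,d\lambda.
\]

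Finally, taking expectations and using $|\Im z|^2\leq|z|^2$ replaces $\mathbf{E}|\Im\gamma_n^{\circ}|^2$ by $\mathbf{Var}\{\gamma_n\}$; assembling the three steps yields (\ref{varNf}) with $C_s=1/(2\pi^3\Gamma(2s))$. The main conceptual step is spotting the Poisson identity, which is what converts high-frequency smoothness of $\widehat{\varphi}$ into the small-$\eta$ behaviour of $\gamma_n(\lambda+i\eta)$; the only technical point is the justification of Fubini and Plancherel, which follows from the crude deterministic bound $|g_n(k)|\leq 2n$ and the exponential factor $e^{-2|k|\eta}$ that makes every intermediate integral absolutely convergent.
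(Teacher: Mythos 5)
Your proof is correct and reconstructs the argument of Shcherbina's paper \cite{Sh:11}, which is what the authors cite for this proposition rather than proving it themselves; the chain Fourier representation $\rightarrow$ Cauchy--Schwarz with weights $(1+2|k|)^{\pm s}$ $\rightarrow$ $\Gamma$-function identity $\rightarrow$ Poisson-kernel/Plancherel $\rightarrow$ $\mathbf{E}|\Im\gamma_n^{\circ}|^2\le\mathbf{Var}\{\gamma_n\}$ is exactly the intended one, and your constant $C_s=1/(2\pi^3\Gamma(2s))$ is consistent with it.

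One small caveat worth noting: for $0<s\le 1/2$ the bound $\|\varphi\|_s<\infty$ does not force $\widehat{\varphi}\in L^1$, so the pointwise Fourier inversion at the start should be justified by an approximation argument (e.g.\ $\varphi$ smooth and compactly supported, then pass to the limit, with the right-hand side of (\ref{varNf}) possibly infinite); for $s>1/2$, which is the regime actually used in the paper ($s=2+\delta$), your argument is complete as written.
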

The second ingredient of the scheme of  \cite{Sh:11} is an improved version
of the martingale approach, providing  the bound $\mathbf{Var}\{\gamma
_{n}(z)\}\leq C(z)$ instead of the bound $\mathbf{Var}\{\gamma _{n}(z)\}\leq
C(z)n$ \ (see e.g. \cite{Pa-Sh:11}, Theorem 19.1.6). Using this version, we
prove
\begin{lemma}
\label{l:varg} Consider matrix $M_{n}$ of (\ref{Mnm}), where $\{\mathbf{y}%
_{\alpha }\}_{\alpha =1}^{m_{n}}$ are i.i.d. random isotropic vectors having
an unconditional distribution and satisfying (\ref{a22L}), and $\{\tau
_{\alpha }\}_{\alpha =1}^{m_{n}}$ are non-negative
numbers satisfying (\ref{sigma}) and (\ref{m4}). Then we have
\begin{equation}
\mathbf{Var}\{\gamma _{n}(z)\}\leq C|\Im z|^{-6},  \label{var1}
\end{equation}%
and also $\forall \varepsilon \in (0,1/2]$
\begin{equation}
\mathbf{Var}\{\gamma _{n}(z)\}\leq \frac{C}{n|\Im z|^{4}}\sum_{\alpha
=1}^{m}(\tau _{\alpha }(|\Im z|+{\tau _{\alpha }}))^{3/2+\varepsilon }%
\mathbf{E}\{n^{-1}\Tr |G^{\alpha }(z)|^{2(1/2+\varepsilon) }\}
%(G^{\alpha }(z)(G^{\alpha }(z))^{\ast})^{1/2+\varepsilon }\}
,  \label{vsum}
\end{equation}%
where $C$ does not depend on $n$ and $z$, $|G^{\alpha }|^2=G^{\alpha }G^{\alpha *}$, and
\begin{equation}
G^{\alpha }(z)=G(z)|_{\tau _{\alpha }=0}.  \label{gal}
\end{equation}%
If $\{\mathbf{y}_{\alpha }\}_{\alpha =1}^{m_{n}}$ are i.i.d. very good
vectors in the sense of Definition \ref{d:very}, then additionally
\begin{equation}
\mathbf{E}\{|\gamma _{n}(z)^{\circ }|^{4}\}\leq C|z|^{4}|\Im z|^{-12},
\label{g4<}
\end{equation}
\end{lemma}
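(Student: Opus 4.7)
The plan is to apply the refined martingale method of \cite{Sh:11}. With $\mathcal{F}_\alpha=\sigma(\mathbf{y}_1,\ldots,\mathbf{y}_\alpha)$ and increments $d_\alpha=(\mathbf{E}_\alpha-\mathbf{E}_{\alpha-1})\gamma_n(z)$, the orthogonality of martingale differences gives $\mathbf{Var}\{\gamma_n(z)\}=\sum_\alpha\mathbf{E}\{|d_\alpha|^2\}$. Since any $\mathbf{y}_\alpha$-independent quantity is annihilated by $\mathbf{E}_\alpha-\mathbf{E}_{\alpha-1}$, two applications of Jensen yield $\mathbf{E}\{|d_\alpha|^q\}\le 2^q\mathbf{E}\{|W_\alpha-c_\alpha|^q\}$ for any $\mathbf{y}_\alpha$-independent $c_\alpha$ and any $q\ge 1$, where $W_\alpha:=\gamma_n-\Tr G^\alpha$. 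The Sherman--Morrison formula applied to $M_n=M_n^\alpha+\tau_\alpha L_{\mathbf{y}_\alpha}$ furnishes
\[W_\alpha=-\frac{\tau_\alpha v_\alpha}{1+\tau_\alpha u_\alpha},\quad u_\alpha:=(G^\alpha\mathbf{y}_\alpha,\mathbf{y}_\alpha),\ v_\alpha:=((G^\alpha)^2\mathbf{y}_\alpha,\mathbf{y}_\alpha),\]
together with the dual identity $(1+\tau_\alpha u_\alpha)^{-1}=1-\tau_\alpha(G\mathbf{y}_\alpha,\mathbf{y}_\alpha)$, yielding the pathwise bound $|1+\tau_\alpha u_\alpha|^{-1}\le 1+\tau_\alpha\|\mathbf{y}_\alpha\|^2/|\Im z|$.

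I will take $c_\alpha:=-\tau_\alpha\bar v_\alpha/(1+\tau_\alpha\bar u_\alpha)$, where $\bar u_\alpha:=n^{-1}\Tr G^\alpha$ and $\bar v_\alpha:=n^{-1}\Tr(G^\alpha)^2$ are the $\mathbf{y}_\alpha$-conditional means of $u_\alpha,v_\alpha$ under isotropy. A short algebraic manipulation gives
\[W_\alpha-c_\alpha=-\frac{\tau_\alpha(v_\alpha-\bar v_\alpha)}{1+\tau_\alpha u_\alpha}+\frac{\tau_\alpha^2\bar v_\alpha(u_\alpha-\bar u_\alpha)}{(1+\tau_\alpha u_\alpha)(1+\tau_\alpha\bar u_\alpha)}.\]
Under unconditionality and (\ref{a22L}), formula (\ref{Var=}) yields $\mathbf{E}^\alpha|u_\alpha-\bar u_\alpha|^2=O(\Tr|G^\alpha|^2/n^2)$ and $\mathbf{E}^\alpha|v_\alpha-\bar v_\alpha|^2=O(\Tr|G^\alpha|^4/n^2)$, while $\mathbf{E}\|\mathbf{y}_\alpha\|^{2k}=O_k(1)$ (a consequence of (\ref{a22L})) controls the $\mathbf{y}_\alpha$-moments of the denominator. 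Combining these via Cauchy--Schwarz together with $|\bar v_\alpha|\le|\Im z|^{-2}$ produces a per-term bound of the schematic shape
\[\mathbf{E}\{|d_\alpha|^2\}\le\frac{C\tau_\alpha^2(|\Im z|+\tau_\alpha)^2}{n^2|\Im z|^4}\mathbf{E}\{\Tr|G^\alpha|^4\}.\]

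To upgrade to the full fractional statement (\ref{vsum}), I combine this with the elementary trace inequality $\Tr|G^\alpha|^4\le|\Im z|^{-(3-2\varepsilon)}\Tr|G^\alpha|^{1+2\varepsilon}$ (valid because all eigenvalues of $|G^\alpha|$ are bounded by $1/|\Im z|$) together with an H\"older interpolation in the numerator balancing the $L^2$-fluctuation $O(\Tr|G^\alpha|^4/n^2)$ against the pointwise bound $|v_\alpha-\bar v_\alpha|\le 2\|\mathbf{y}_\alpha\|^2/|\Im z|^2$, calibrated so as to produce the weight $(\tau_\alpha(|\Im z|+\tau_\alpha))^{3/2+\varepsilon}$ of (\ref{vsum}). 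Summing over $\alpha$ and using the crude bound $\Tr|G^\alpha|^{1+2\varepsilon}\le n/|\Im z|^{1+2\varepsilon}$ together with (\ref{m4}) and $m/n\to c$ then yields (\ref{var1}) with the stated $|\Im z|^{-6}$ dependence (the case $\varepsilon=1/2$).

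For (\ref{g4<}) I apply Burkholder's martingale inequality
\[\mathbf{E}\{|\gamma_n^\circ(z)|^4\}\le C\Big(\mathbf{E}\Big[\sum_\alpha|d_\alpha|^2\Big]^2+\sum_\alpha\mathbf{E}\{|d_\alpha|^4\}\Big),\]
bound the first term by the square of the variance estimate just obtained, and for the second invoke the \emph{very good} hypothesis (\ref{m4Ayy}), which furnishes $\mathbf{E}^\alpha|u_\alpha-\bar u_\alpha|^4\le\widetilde\delta_n\|G^\alpha\|^4=O(n^{-2}|\Im z|^{-4})$ and analogously for $v_\alpha$; repeating the centering argument in $L^4$ and summing over $\alpha$ delivers the $O(|z|^4|\Im z|^{-12})$ bound (the factor $|z|^4$ arising from refined bounds using $M_n\succeq 0$ together with the identity $M_nG=I+zG$ to trade powers of $\|G\|$ for powers of $|z|$). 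The principal technical obstacle is the H\"older calibration required for (\ref{vsum}): since the random denominator has no useful $L^\infty$-bound, one must balance its $L^k$-moments against the $L^2$-variance of the numerator in a way that preserves the $n^{-1}$ gain (respectively $n^{-2}$ for the fourth-moment estimate) essential for absorbing the sum over $\alpha=1,\ldots,m$ into an $O(1)$ bound.
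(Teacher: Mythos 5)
Your general scaffolding — martingale decomposition, orthogonality of increments, Sherman--Morrison, replacing the conditional mean of $\gamma_n-\gamma_n^\alpha$ by the ratio of conditional means — is the paper's scaffolding as well. But the specific algebraic split you make of $W_\alpha-c_\alpha$ leaves the \emph{random} quantity $A_{\alpha n}=1+\tau_\alpha u_\alpha$ in both denominators with no compensating numerator, and this is exactly where the argument breaks.

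In the paper, the identity $\frac1A=\frac1{\mathbf{E}_\alpha\{A\}}-\frac{A^\circ_\alpha}{A\,\mathbf{E}_\alpha\{A\}}$ is applied to $B_{\alpha n}/A_{\alpha n}$, producing the two terms
\begin{equation*}
\frac{(B_{\alpha n})^\circ_\alpha}{\mathbf{E}_\alpha\{A_{\alpha n}\}}\quad\text{and}\quad\frac{B_{\alpha n}}{A_{\alpha n}}\cdot\frac{(A_{\alpha n})^\circ_\alpha}{\mathbf{E}_\alpha\{A_{\alpha n}\}}.
\end{equation*}
In both, the only $\mathbf{y}_\alpha$-dependent denominator appears inside the ratio $B_{\alpha n}/A_{\alpha n}$, which is bounded pathwise by $1/|\Im z|$ via the spectral-measure representation (\ref{gya})--(\ref{B/A<}); everything else sees only the $\mathbf{y}_\alpha$-free quantity $\mathbf{E}_\alpha\{A_{\alpha n}\}=1+\tau_\alpha n^{-1}\gamma_n^\alpha$, which admits the deterministic bound $|\mathbf{E}_\alpha\{A_{\alpha n}\}|^{-1}\le 1+\tau_\alpha/|\Im z|$. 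Consequently the only $\mathbf{y}_\alpha$-moments needed are the second moments of $A^\circ_\alpha$ and $B^\circ_\alpha$, i.e.\ precisely the fourth moments of $\mathbf{y}_\alpha$ supplied by (\ref{a22L}).

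Your split instead gives $-B^\circ_\alpha/A_{\alpha n}$ plus a second term, both carrying the random $A_{\alpha n}$ in the denominator. To control $\mathbf{E}\{|B^\circ_\alpha/A_{\alpha n}|^2\}$ you must decouple, and the Cauchy--Schwarz/H\"older route you sketch forces you to estimate $\mathbf{E}\{|B^\circ_\alpha|^4\}$ and $\mathbf{E}\{|A_{\alpha n}|^{-4}\}$, hence (via $|A_{\alpha n}^{-1}|\le 1+\tau_\alpha\|\mathbf{y}_\alpha\|^2/|\Im z|$) the eighth moment of $\|\mathbf{y}_\alpha\|$. Your assertion that $\mathbf{E}\{\|\mathbf{y}_\alpha\|^{2k}\}=O_k(1)$ is ``a consequence of (\ref{a22L})'' is false for $k>2$: (\ref{a22L}) controls only mixed moments of total degree four. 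That is why Lemma \ref{l:varg}, which assumes only (\ref{a22L}) for (\ref{var1}) and (\ref{vsum}), requires the paper's decomposition and not yours. (Using the deterministic positive-definiteness bound $|A_{\alpha n}^{-1}|\le |z|/|\Im z|$ instead would avoid the moment issue but injects spurious powers of $|z|$ that are incompatible with the stated bounds (\ref{var1})--(\ref{vsum}); the paper reserves it only for (\ref{g4<}), where $|z|^4$ is permitted.)

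Finally, the passage from the crude per-term estimate to the refined weight $(\tau_\alpha(|\Im z|+\tau_\alpha))^{3/2+\varepsilon}$ in (\ref{vsum}) is what you yourself identify as ``the principal technical obstacle,'' and you do not resolve it. The paper resolves it through the elementary but essential observation
\begin{equation*}
\frac{n^{-1}\Tr|G^\alpha|^2}{|\mathbf{E}_\alpha\{A_{\alpha n}\}|}\le\frac{1}{\tau_\alpha|\Im z|},
\end{equation*}
which comes from $|\mathbf{E}_\alpha\{A_{\alpha n}\}|\ge|\Im\mathbf{E}_\alpha\{A_{\alpha n}\}|=\tau_\alpha n^{-1}|\Im z|\Tr|G^\alpha|^2$; this inequality, interpolated against the trivial bound $|\mathbf{E}_\alpha\{A_{\alpha n}\}|^{-1}\le 1+\tau_\alpha/|\Im z|$, delivers (\ref{vsum}) without any additional moment input. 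This trick is absent from your proposal, and no calibration of the kind you describe can substitute for it without importing hypotheses beyond (\ref{a22L}).
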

\begin{proof}
Given an integer $\alpha \in [1,m]$, denote $\mathbf{E}_{\le \alpha}$ and
$\mathbf{E}_{\alpha}$  the expectation with respect to
$\{\mathbf{y}_{1 },..., \mathbf{y}_{\alpha }\}$ and $\mathbf{y}_{\alpha } $,
so that for any random variable $\xi$, depending on
$\{\mathbf{y}_{\alpha}\}_{\alpha=1}^m$ we have
$
\mathbf{E}_{\le 0}=\xi, \quad \mathbf{E}_{\le m}=\mathbf{E}\{\xi\}
$
and
\[
\xi -\mathbf{E}\{\xi\}=\sum_{\alpha=1}^m (\mathbf{E}_{\le \alpha-1}\{\xi\}-\mathbf{E}_{\le \alpha}\{\xi\}).
\]
By using the definition of $\mathbf{E}_{\le \alpha}$ and the above identity it is easy to find that
 \begin{equation}
 \mathbf{Var}\{\xi\}=\sum_{\alpha=1}^m\mathbf{E}\{|\mathbf{E}_{\le \alpha -1}\{\xi\}-
\mathbf{E}_{\le \alpha}\{\xi\}|^2\}.
\label{varxi}
\end{equation}
Denote also
\begin{equation}
M^\alpha_{n}=M_{n}|_{\tau_\alpha=0},\quad \gamma_n^\alpha(z) =\Tr G^\alpha(z), \quad \xi^\circ_\alpha=\xi-\mathbf{E}_{\alpha}\{\xi\}, \label{Ga}
\end{equation}
where $G^{\alpha}(z)$  is defined in (\ref{gal}).
%We will use the result of \cite{Dh-Co:68} according to which if   $S_0=0$, $\{S_{m}\}_{m =1}^{\infty}$ is
%a martingale, then for all $\nu\geq 2$ there exist $C=C(\nu)$ such that
%\begin{equation}
%\mathbf{E}\{|S_{m}|^{\nu}\}\leq Cm^{\nu/2-1}\sum_{j=1}^m \mathbf{E}\{|S_{j}-S_{j-1}|^{\nu}\}.  \label{mart}
%\end{equation}
Applying (\ref{varxi}) to $\xi=\gamma_n$ (see (\ref{gamma})) and using the
Schwarz inequality, we get
\begin{equation}
 \mathbf{Var}\{\gamma_n\}\le\sum_{\alpha=1}^m\mathbf{E}\{|(\gamma_n)^\circ_\alpha|^2\}.
\label{var=}
\end{equation}
Furthermore, since $M_{n}-M_{n}^{\alpha}=\tau_{\alpha}L_{\alpha}$ is
the rank one matrix (see (\ref{Mnm}) and (\ref{Ga})), we can write
the formula
 \begin{equation}
 G-G^\alpha=-\frac{\tau_\alpha G^\alpha L_\alpha G^\alpha}{1+\tau_\alpha (G^\alpha \mathbf{y}_{\alpha },\mathbf{y}_{\alpha })},\label{r1}
\end{equation}
implying for $\gamma_n$ and $\gamma_n^{\alpha}$ of (\ref{gal}) and (\ref{Ga})
 \begin{equation}
 \gamma _{n}-\gamma_{n}^\alpha =-\frac{\tau_\alpha ((G^{\alpha})^2 \mathbf{y}_{\alpha },\mathbf{y}_{\alpha })}{1+\tau_\alpha (G^\alpha \mathbf{y}_{\alpha },\mathbf{y}_{\alpha })}=-\frac{B_{\alpha n}}{A_{\alpha n}},\label{g-ga}
\end{equation}
where
\begin{align}
&A_{\alpha n} =1+\tau_\alpha (G^\alpha \mathbf{y}_{\alpha },\mathbf{y}_{\alpha }),\label{Aa}
\\
&B_{\alpha n}=\frac{d}{dz}A_{\alpha n}=\tau_\alpha ((G^{\alpha})^2 \mathbf{y}_{\alpha },\mathbf{y}_{\alpha }).\label{Ba}
\end{align}
It follows from the spectral theorem for real symmetric matrices that there
exists a non-negative measure $m^{\alpha}$ such that
\begin{equation}
 (G^\alpha \mathbf{y}_{\alpha },\mathbf{y}_{\alpha })= \int_0^\infty\frac{m^\alpha(d\lambda)}{\lambda-z}.
\label{gya}
\end{equation}
This and (\ref{Aa}) -- (\ref{Ba}) yield
\begin{equation}
|A_\alpha| \ge |\Im A_\alpha|=
\tau_\alpha |\Im (G^\alpha \mathbf{y}_{\alpha },\mathbf{y}_{\alpha }))|=\tau_\alpha |\Im z| \int_0^
\infty\frac{ m^\alpha(d\lambda)}{|\lambda-z|^{2}},
\label{Abou}
\end{equation}
and
\begin{equation}
|B_\alpha| \le
|\tau_\alpha| \int_0^
\infty\frac{ m^\alpha(d\lambda)}{|\lambda-z|^{2}},
\label{Bbou}
\end{equation}
implying
\begin{equation}
|B_{\alpha n}/A_{\alpha n}|\le 1/|\Im z|.\label{B/A<}
\end{equation}
This and the identity
\begin{equation}
\frac{1}{A}=\frac{1}{\mathbf{E}\{A\}}-\frac{A^\circ}{A\mathbf{E}\{A\}}, \;
A^\circ=A-\mathbf{E}\{A\}, \label{AEA}
\end{equation}
allows us to write
 \begin{align}
\mathbf{E}\{|(\gamma_n)^\circ_\alpha|^2\}&= \mathbf{E}\{|\gamma _{n}-\gamma^\alpha_{n}-\mathbf{E}_\alpha\{\gamma _{n}-\gamma^\alpha_{n}\}|^2\}\notag
\\
&\leq\mathbf{E}\Big\{\Big|\frac{B_{\alpha n}}{A_{\alpha n}}-
\frac{\mathbf{E}_\alpha\{B_{\alpha n}\}}{\mathbf{E}_\alpha\{A_{\alpha n}\}}\Big|^2\Big\}=\mathbf{E}\Big\{\Big|\frac{(B_{\alpha n})^\circ_\alpha}{\mathbf{E}_\alpha\{A_{\alpha n}\}}-\frac{B_{\alpha n}}{A_{\alpha n}}
\cdot\frac{(A_{\alpha n})^\circ_\alpha}{\mathbf{E}_\alpha\{A_{\alpha n}\}}
\Big|^2\Big\}\notag
\\
&\le
 2 \mathbf{E}\Big\{\Big|\frac{(B_{\alpha n})^\circ_\alpha}{\mathbf{E}_\alpha\{A_{\alpha n}\}}
\Big|^2\Big\}+\frac{2}{|\Im z|^{2}} \mathbf{E}\Big\{\Big|\frac{(A_{\alpha n})^\circ_\alpha}{\mathbf{E}_\alpha\{A_{\alpha n}\}}
\Big|^2\Big\}.\label{gAB}
\end{align}

Let us estimate the second term on the r.h.s. of (\ref{gAB}). Since  $(A_{\alpha n})^\circ_\alpha= $ \\
$\tau_\alpha (G^\alpha \mathbf{y}_{\alpha },\mathbf{y}_{\alpha })^\circ_\alpha$, then in view of definitions of (\ref{a22L})
 \begin{align}
\frac{1}{\tau_\alpha^{2}}\mathbf{E}_\alpha\{|(A_{\alpha n})^\circ_\alpha|^{2}\}
%&=(a_{2,2}-n^{-2})|\Tr A_n|^2+2a_{2,2}\Tr |A_{n}|^2+\kappa_{4}\sum_{j=1}^n %|A_{njj}|^2
=\big(a_{2,2}-\frac{1}{n^{2}}\big)|\Tr G^\alpha|^2+2a_{ 2,2}\Tr |G^\alpha|^2 +\kappa_{4}\sum_{j=1}^n |G^\alpha_{jj}|^2.\label{VarGa}
\end{align}
 This, (\ref{a22L}) and the bound $||G^\alpha||\le|\Im z|^{-1} $ yield
\begin{align}
&\mathbf{E}\{|(A_{\alpha n})^\circ_\alpha|^{2}\}
%&=(a_{2,2}-n^{-2})|\Tr A_n|^2+2a_{2,2}\Tr |A_{n}|^2+\kappa_{4}\sum_{j=1}^n %|A_{njj}|^2
\le \frac{C\tau_\alpha^{2}}{n^{2}}\Tr |G^\alpha|^2,\label{VarA}
% \end{align}
% Similarly,
% \begin{align}
\\
&\mathbf{E}\{|(B_{\alpha n})^\circ_\alpha|^{2}\}
%&=(a_{2,2}-n^{-2})|\Tr A_n|^2+2a_{2,2}\Tr |A_{n}|^2+\kappa_{4}\sum_{j=1}^n %|A_{njj}|^2
\le \frac{C\tau_\alpha^{2}}{n^{2}}\Tr |G^\alpha|^4\le \frac{C\tau_\alpha^{2}}{n^{2}|\Im z|^{2}}\Tr |G^\alpha|^2.
\label{VarB}
\end{align}
It follows then from (\ref{var=}), (\ref{gAB}), and (\ref{VarA}) -- (\ref{VarB}) that
 \begin{equation}
\mathbf{Var}\{\gamma_n(z)\}\leq \frac{C}{n|\Im z|^2}\sum_{\alpha=1}^m\tau_\alpha^2
\mathbf{E}\Big\{\frac{n^{-1}\Tr|G^\alpha(z)|^{2}}{|\mathbf{E}_{\alpha}\{A_{\alpha n}\}|^{2}}\Big\}.\label{var<}
\end{equation}
Let $N_n^\alpha$ be the normalized counting measure of
$M_{n}^\alpha$. Then we have in view of (\ref{Ga}), (cf. (\ref{gya}))
\begin{equation}\label{gan}
\gamma_n^\alpha(z)=\int_0^\infty\frac{N_n^\alpha(d\lambda)}{\lambda-z}.
\end{equation}
In addition,  (\ref{isotropic}) and (\ref{Ga}) imply
\begin{equation}
 \mathbf{E}_\alpha\{A_{\alpha n}\}=1+\tau_\alpha n^{-1}\gamma_{n}^\alpha.
\label{EA}
\end{equation}
This and an argument similar to that leading to (\ref{B/A<}) yield
\begin{equation}
 \frac{n^{-1}\Tr |G^\alpha|^2}{|\mathbf{E}_\alpha\{A_{\alpha n}\}|}\le \frac{1}{\tau_\alpha|\Im z|}.
\label{1}
\end{equation}
It also follows from (\ref{r1}) that
\begin{equation}
A^{-1}_{\alpha n}=1+\tau_{\alpha}(G \mathbf{y}_{\alpha },\mathbf{y}_{\alpha})
\label{A=}
\end{equation}
implying, together with (\ref{isotropic}) and the Jensen inequality
\begin{equation}
{|\mathbf{E}_\alpha\{A_{\alpha n}\}|}^{-1}\le\mathbf{E}_\alpha\{|A_{\alpha n}|^{-1}\}\le  {1+\tau_{\alpha}}{|\Im z|^{-1}}.
\label{EA>>}
\end{equation}
Now (\ref{var<}), (\ref{EA>>}), and (\ref{m4}) yield (\ref{var1}).

Applying (\ref{1}),  (\ref{EA>>})  and Jensen inequality again, we get
\begin{align*}
\frac{n^{-1}\Tr|G^\alpha(z)|^{2}}{|\mathbf{E}_{\alpha}\{A_{\alpha n}\}|^{2}}&=\frac{1}{|\mathbf{E}_{\alpha}\{A_{\alpha n}\}|^{3/2+\varepsilon}}\Big(\frac{\Tr|G^\alpha(z)|^{2}}{n|\mathbf{E}_{\alpha}\{A_{\alpha n}\}|}\Big)^{1/2-\varepsilon}(n^{-1}\Tr|G^\alpha(z)|^{2})^{1/2+\varepsilon}
\\
&\le(  {1+\tau_{\alpha}}{|\Im z|^{-1}})^{3/2+\varepsilon}{(\tau_{\alpha}}{|\Im z|^{}})^{-1/2+\varepsilon}\cdot n^{-1}\Tr|G^\alpha(z)|^{2(1/2+\varepsilon)},
\end{align*}
This and  (\ref{var<}) lead to (\ref{vsum}).

To prove (\ref{g4<}) we will use an analog of (\ref{varxi}) for the 4th moment of  $\gamma_n^\circ=\gamma_n-\mathbf{E}\{\gamma_n\}$ (se e.g. \cite{Dh-Co:68} and \cite{Pa-Sh:11}, Section 18.1.2), which together with an argument analogous to that leading to (\ref{gAB}), yields
\begin{align}
 \mathbf{E}\{|\gamma_n^\circ|^4\}&\le m\sum_{\alpha=1}^m\mathbf{E}\{|(\gamma_n)^\circ_\alpha|^4\}\notag
 \\
 &\le Cm\sum_{\alpha=1}^m \mathbf{E}\Big\{\Big|\frac{(B_{\alpha n})^\circ_\alpha}{\mathbf{E}_\alpha\{A_{\alpha n}\}}
\Big|^4\Big\}+\frac{1}{|\Im z|^{4}} \mathbf{E}\Big\{\Big|\frac{(A_{\alpha n})^\circ_\alpha}{\mathbf{E}_\alpha\{A_{\alpha n}\}}
\Big|^4\Big\},
\label{4}
\end{align}
where (see (\ref{m4Ayy}))
\begin{equation}
 \mathbf{E}_\alpha\{|(A_{\alpha n})^\circ_\alpha|^4\}\le \tau_\alpha^4|/\Im z|^{4}n^{2},\quad \mathbf{E}_\alpha\{|(B_{\alpha n})^\circ_\alpha|^4\}\le \tau_\alpha^4|/\Im z|^{8}n^{2}.
\label{AB4<}
\end{equation}
Since the matrix $M_n$ with non-negative $\tau_\alpha, \; \alpha=1,...,m$
is positive definite, it follows from (\ref{gya}) that
\begin{equation*}
\Im (z (G^\alpha \mathbf{y}_{\alpha },\mathbf{y}_{\alpha }))=\Im \int_0^\infty\frac{\lambda m^\alpha(d\lambda)}{\lambda-z}=\Im z \cdot\int_0^\infty\frac{\lambda m^\alpha(d\lambda)}{|\lambda-z|^{2}}.
%\label{1}
\end{equation*}
This yields the inequality $\Im z \cdot\Im (z (G^\alpha \mathbf{y}_{\alpha },\mathbf{y}_{\alpha}))\ge0$, so that
\begin{equation}
{|A_{\alpha n}|^{-1}}\le \Big|\frac{z}{\Im z +\tau_{\alpha}\Im (z (G^\alpha \mathbf{y}_{\alpha },\mathbf{y}_{\alpha }))}\Big|\le {|z|}{|\Im z|^{-1}},
\label{A>}
\end{equation}
and by the Jensen inequality
\begin{equation}
{|\mathbf{E}_\alpha\{A_{\alpha n}\}|}^{-1}\le  {|z|}{|\Im z|^{-1}}.
\label{EA>}
\end{equation}Now (\ref{g4<}) follows from (\ref{4}) -- (\ref{AB4<})
and (\ref{EA>}).
\end{proof}
% \begin{remark}
% Let us note that if $\int \tau^2d\sigma(\tau)\le \infty$, then using
%  (\ref{var<}), (\ref{1}), and (\ref{EA>>}) instead of  (\ref{EA>}), we get
%  \begin{equation}
%  \mathbf{Var}\{\gamma_n(z)\}\leq C|\Im z|^{-4}.\label{var11}
%  \end{equation}
% \end{remark}

\bigskip \noindent \textbf{Proof of Lemma \ref{l:Var}.} It follows from (\ref%
{varNf}) and (\ref{vsum}) that
\begin{align}
\mathbf{Var}\{\mathcal{N}_{n}[\varphi ]\}\leq C_{s}^{\prime }||\varphi
||_{s}^{2}\int_{0}^{\infty }& d\eta e^{-\eta }\eta ^{2s-5}\frac{1}{n}%
\sum_{\alpha =1}^{m}(\tau _{\alpha }(\eta +{\tau _{\alpha }}%
))^{3/2+\varepsilon }  \notag \\
& \times \int_{-\infty }^{\infty }\mathbf{E}\{n^{-1}\Tr|(G^{\alpha
}(z)|^{1+2\varepsilon }\}d\lambda ,  \label{vv}
\end{align}%
and we have for $z=\lambda+i\eta$ (cf. (\ref{gan}))
\begin{equation*}
\int_{-\infty }^{\infty }\mathbf{E}\{n^{-1}Tr|G^{\alpha
}(z)|^{1+2\varepsilon )}\}d\lambda =\int_{0}^{\infty }\mathbf{E}%
\{N_{n}^{\alpha }(d\lambda )\}\int_{-\infty }^{\infty }\frac{d\mu }{%
((\lambda -\mu )^{2}+\eta ^{2})^{1/2+\varepsilon }}\leq \frac{C}{\eta
^{2\varepsilon }}.
\end{equation*}%
Thus, for any $s=2+\delta $ $\delta >\varepsilon$, the integral over $\eta$
in (\ref{vv}) converges. Lemma \ref{l:Var} is proved.

\section{Proof of Theorem \protect\ref{t:clt}}

\label{s:main}
It suffices to show that if
\begin{equation}  \label{Z_n}
Z_n(x)=\mathbf{E}\{e_{n}(x)\},\quad e_{n}(x)=e^{ix\mathcal{N}_n^\circ[\varphi%
]},
\end{equation}%
then we have uniformly in $|x|\le C$
\begin{equation}  \label{limZ}
\lim_{n\rightarrow\infty}Z_n(x)=\exp\{-x^2V[\varphi]/2\}
\end{equation}
with $V[\varphi]$ of (\ref{Var}).
% We prove first (\ref{limZ}), hence the theorem, for a certain class of test functions
% and then we extend the result to the
% functions from $H_s$
% by using a standard approximation procedure based on Lemma \ref{l:Var}. %Thus
Define for any test-functions $\varphi\in H_{2+\delta}$
\begin{equation}  \label{phi_y}
\varphi_y=P_y*\varphi,
\end{equation}
where $P_y$ is the Poisson kernel
\begin{equation}  \label{P_y}
P_y(x)=\frac{y}{\pi(x^2+y^2)},
\end{equation}
and "$*$" denotes the convolution. We have
\begin{equation}  \label{appr}
\lim_{y\downarrow 0}||\varphi-\varphi_y||_{2+\delta}=0.
\end{equation}
Denote for the moment the characteristic function (\ref{Z_n}) by $%
Z_n[\varphi]$, to make explicit its dependence on the test function. We have then for any
converging subsequence $\{Z_{n_j}[\varphi]\}_{j=1}^\infty$
\begin{equation*}
\lim_{{n_j}\rightarrow\infty}Z_{n_j}[\varphi]=\lim_{y\downarrow 0}\lim_{{n_j}%
\rightarrow\infty}(Z_{n_j}[\varphi]-Z_{n_j}[\varphi_{y}])+\lim_{y\downarrow
0}\lim_{{n_j}\rightarrow\infty}Z_{n_j}[\varphi_{y}].
\end{equation*}
Since by (\ref{apriory}) and (\ref{appr})
\begin{equation*}
|Z_{n_j}[\varphi]-Z_{n_j}[\varphi_{y}]|\le |x|\mathbf{Var}\{\mathcal{N}%
_{n_j}[\varphi] -\mathcal{N}_{n_j}[\varphi_y]\}^{1/2}\le
C|x|||\varphi-\varphi_y||_{2+\delta}{\rightarrow}0,
\end{equation*}
as $y\downarrow 0$, then
\begin{equation}  \label{limny}
\lim_{{n_j}\rightarrow\infty}Z_{n_j}[\varphi]=\lim_{y\downarrow 0}\lim_{{n_j}%
\rightarrow\infty}Z_{n_j}[\varphi_{y}].
\end{equation}
Hence it suffices to find the limit of $Z_{yn}:=Z_{n}[\varphi_{y}]=\mathbf{%
E}\{e_{y,n}(x)\}$ with $e_{y,n}(x)=e^{ix\mathcal{N}_n^\circ[\varphi_y]}$, as ${n}%
\rightarrow\infty$.

It follows from (\ref{phi_y}) -- (\ref{P_y}) that
\begin{equation}  \label{repr_N}
\mathcal{N}_n[\varphi_ y]=\frac{1}{\pi}\int
\varphi(\mu)\Im\gamma_n(z)d\mu,\quad z=\mu+iy.
\end{equation}
This allows us to write
\begin{equation}
\frac{d}{dx}Z_{yn}(x)=\frac{1}{2\pi}\int \varphi(\mu)(
Y_{n}(z,x)- Y_{n}(\overline z,x))d\mu,  \label{dZ=}
\end{equation}
where %\begin{equation}\label{Yn}
$Y_n(z,x)=\mathbf{E}\{\gamma_n(z)e_{yn}^\circ(x)\}$. %\end{equation}
Now the first bound in (\ref{g4<}) yields
\begin{equation*}
|Y_n(z,x)|\le2\mathbf{Var}\{\gamma_n(z)\}^{1/2}\le C|\Im z|^{-6}.
\end{equation*}
This and the dominated
convergence theorem imply that if $\varphi\in L^1$, then the limit of the integral in (\ref{dZ=}) as $n \to \infty $ can be obtained from that of $Y_n$ for any fixed non-real $z$.

We have from the resolvent identity and (\ref{gamma})
\begin{equation*}
\gamma_n(z)=-\frac{n}{z}+\frac{1}{z}\Tr M_{n}G(z)=\frac{-n+m}{z}-\frac{1}{z%
}\sum_{\alpha=1}^m A^{-1}_{\alpha n}(z),
\end{equation*}
where $A_{\alpha n}$ is defined in (\ref{Aa}). This implies
\begin{align}  \label{Y=}
Y_n(z,x)&=-\frac{1}{z}\sum_{\alpha=1}^m\mathbf{E}\{A^{-1}_{\alpha
n}(z)(e_{n}^{\alpha}(x))^{\circ}\} -\frac{1}{z}\sum_{\alpha=1}^m\mathbf{E}%
\{A^{-1}_{\alpha n}(z)(e_{n}(x)-e_n^{\alpha}(x))^\circ\}  \notag \\
&=:T_{1}^{(n)}+T_{2}^{(n)}.
\end{align}
Iterating (\ref{AEA}) three times, we get for $T_{1}^{(n)}$ of (\ref{Y=}):
\begin{align}  \label{T1=}
T_{1}^{(n)}=\frac{1}{z}\sum_{\alpha=1}^m&\frac{\mathbf{E}\{A^{\circ}_{\alpha
n}(z)(e_{yn}^{\alpha}(x))^{\circ}\}}{\mathbf{E}\{A_{\alpha n}(z)\}^{2}} -\frac{1}{%
z}\sum_{\alpha=1}^m\frac{\mathbf{E}\{(A^{\circ}_{\alpha
n}(z))^2(e_{yn}^{\alpha}(x))^{\circ}\}}{\mathbf{E}\{A_{\alpha
n}(z)\}^{3}}  \notag
\\
&+\frac{1}{z}\sum_{\alpha=1}^m\frac{\mathbf{E}\{(A^{\circ}_{\alpha
n}(z))^3A^{-1}_{\alpha
n}(z)(e_{yn}^{\alpha}(x))^{\circ}\}}{\mathbf{E}\{A_{\alpha
n}(z)\}^{3}}=:T_{11}^{(n)}-T_{12}^{(n)}+T_{13}^{(n)}.
\end{align}
It follows from (\ref{EA0p<}) and (\ref{A>}) -- (\ref{EA>}) that
\begin{align}  \label{T13=}
T_{13}^{(n)}=O(n^{-1/2}),\quad n\rightarrow\infty.
\end{align}
Consider  now $T_{11}^{(n)}$. Since $e_{yn}^{\alpha}$ does not depend on $\mathbf{%
y}_\alpha $,  (\ref{EA}) implies
\begin{align}
\mathbf{E}\{A_{\alpha n}(z)(e_{yn}^{\alpha}(x))^{\circ}\}&=\mathbf{E}\{\mathbf{E}%
_\alpha\{A_{\alpha n}(z)\}(e_{yn}^{\alpha}(x))^{\circ}\}  \notag \\
&=\tau_{\alpha}
n^{-1}\mathbf{E}\{\gamma^{\alpha}_n(z)(e_{yn}^{\alpha}(x))^{
\circ}\}=\tau_{\alpha}n^{-1}Y_n(z,x)+R_n,  \label{Eae=}
\end{align}
where $R_n=\tau_{\alpha}n^{-1}\mathbf{E}\{\gamma^{\alpha}_n
(e_{yn}^{\alpha})^{\circ}-\gamma_ne_{yn}^{\circ}\}$.
Applying consequently (\ref{repr_N}), the Schwarz inequality and then
(\ref{var1}), (\ref{vg-ga}) and (\ref{phi_y}), we get % \begin{align}
% &| \gamma _{n}-\gamma_{n}^\alpha |\le |\Im z|^{-1},\label{g-ga<}
%  \\
%  &| e _{n}-e_{n}^\alpha |\le \frac{|x|}{\pi}\int |\varphi_0(\mu)||\gamma_n(z_2)-\gamma_{n}^\alpha (z_2)| d\mu\le \frac{C_{\varphi}|x|}{\pi|\Im z|}. \label{e-ea<}
% \end{align}
\begin{align}
\tau_{\alpha}^{-1}n|R_n|&\le2\mathbf{E}\{|( \gamma
_{n}-\gamma_{n}^{\alpha})^{\circ}(z) |\}+\frac{|x|}{\pi}\int |\varphi(\lambda_2)|\mathbf{E}\{| \gamma^{\circ} _{n} (z)||( \gamma
_{n}-\gamma_{n}^{\alpha})^{\circ}(z_2)|\} d\lambda_2  \notag \\
&\le C_z\tau_\alpha n^{-1/2}.  \label{Rn<}
\end{align}
Here and below we denote by $C_z$ any positive quantity depending
only on $|\Im z|$ and $|z|$. It follows then from (\ref{Eae=}) -- (\ref{Rn<})
that
\begin{align*}
\mathbf{E}\{A_{\alpha
n}(z)(e_{n}^{\alpha}(x))^{\circ}\}&=\tau_{\alpha}n^{-1}Y_n(z,x)+O(n^{-3/2}).
\end{align*}
Plugging this and (\ref{lim1/EA}) in $T_{11}^{(n)}$ of (\ref{T1=}), we get
\begin{align*}
T_{11}^{(n)}=Y_n(z,x)\frac{1}{nz}\sum_{\alpha=1}^m\frac{\tau_{\alpha}}{%
(1+\tau_\alpha f(z))^2}+o(1),\quad n\rightarrow\infty,
\end{align*}
and by (\ref{sigma}) and (\ref{nmc})
\begin{align}  \label{T11=}
T_{11}^{(n)}=Y_n(z,x)\frac{c}{z}\int_{0}^\infty\frac{\tau d\sigma(\tau)}{%
(1+\tau f(z))^2}+o(1),\quad n\rightarrow\infty.
\end{align}
Next, it follows from (\ref{A0Aa0}) -- (%
\ref{ga4<}), the Schwarz inequality and (\ref{varEA}) that
\begin{align}
|\mathbf{E}\{(A^{\circ}_{\alpha n})^2 (e_{yn}^{\alpha})^{\circ}\}|&= \mathbf{|E}\{%
\mathbf{E}_\alpha\{((A_{\alpha n})^{\circ
}_\alpha)^2\}(e_{yn}^{\alpha})^{\circ}\}
+\tau_{\alpha}^2n^{-2}\mathbf{E}\{((\gamma_n^{\alpha})^{\circ})^2
(e_{yn}^{\alpha})^{ \circ}\}|\notag
\\
&= o(n^{-1}),  \label{3}
\end{align}
hence,
\begin{align}  \label{T12=}
T_{12}^{(n)}=o(1), \quad n\rightarrow\infty.
\end{align}
Now (\ref{T13=}), (\ref{T11=}) and (\ref{T12=}) yield for $%
T_1^{(n)}$ of (\ref{T1=}):
\begin{align}  \label{T1}
T_{1}^{(n)}=Y_n(z,x)\frac{c}{z}\int_{0}^\infty\frac{\tau d\sigma(\tau)}{%
(1+\tau f(z))^2}+o(1),\quad n\rightarrow\infty.
\end{align}
Consider  $T_{2}^{(n)}$ of (\ref{Y=}). Since by (\ref{repr_N})
\begin{align*}
e_{yn}-e_{yn}^\alpha=&\frac{ixe_{yn}^\alpha}{\pi}\int _{  %
}\varphi(\lambda_1)\Im ( \gamma
_{n}-\gamma_{n}^{\alpha})^{\circ}(z_1)d\lambda_1 \\ 
&+O\Big(\Big|\int _{%
  }\varphi(\lambda)\Im ( \gamma
_{n}-\gamma_{n}^{\alpha})^{\circ}(z_1)d\lambda_1\Big|^2\Big),
\end{align*}
then in view of (\ref{g-ga})
\begin{align}
\mathbf{E}\{A_{\alpha n}^{-1}(z)(e_{yn}-e_{yn}^\alpha)^\circ(x)\}=&\frac{ix}{%
\pi}\int \varphi(\lambda_1) \mathbf{E}\{e_{yn}^\alpha(x)(A_{%
\alpha n}^{-1})^\circ(z) \Im (B_{\alpha n}A_{\alpha
n}^{-1})^{\circ}(z_1)\}d\lambda_1  \notag \\
&+\int \int
O(R_n^{(1)})\varphi(\lambda_1)%
\varphi(\lambda_2)d\lambda_1 d\lambda_2,  \label{e-e=}
\end{align}
where
\begin{align*}
R_n^{(1)}= \mathbf{E}\{(A_{\alpha n}^{-1})^\circ(z) \Im (B_{\alpha
n}A_{\alpha n}^{-1})^{\circ}(z_1) \Im (B_{\alpha n}A_{\alpha
n}^{-1})^{\circ}(z_\nu)\}.
\end{align*}
Applying (\ref{AEA}) to $A_{\alpha n}^{-1}(z) $, $A_{\alpha
n}^{-1}(z_1) $, $A_{\alpha n}^{-1}(z_2), $ we get
\begin{align*}
\mathbf{E}\{&(A_{\alpha n}^{-1})^\circ(z) (B_{\alpha n}A_{\alpha
n}^{-1})^{\circ}(z_1) (B_{\alpha n}A_{\alpha n}^{-1})^{\circ}(z_2)\}
\\
&=\frac{\mathbf{E}\{{(A_{\alpha n}^\circ A_{\alpha n}^{-1})^\circ(z)} {%
(B_{\alpha n}-A_{\alpha n}^\circ A_{\alpha n}^{-1}B_{\alpha
n})^\circ(z_1)} {(B_{\alpha n}-A_{\alpha n}^\circ A_{\alpha
n}^{-1}B_{\alpha n})^\circ(z_2)}\}}{\mathbf{E}\{A_{\alpha n}(z) \}\mathbf{E%
}\{A_{\alpha n}(z_1) \}{\mathbf{E}\{A_{\alpha n}(z_2) \}}}.
\end{align*}
This and (\ref{EA0p<}) yield
\begin{align}
R_n^{(1)}= O(n^{-1/2}),\quad n\rightarrow\infty.  \label{Rn1}
\end{align}
Similarly, iterating twice (\ref{AEA}), applying (\ref{EA0p<}) and (\ref%
{varEA}) and taking into account that only linear terms in $A_{\alpha n}^\circ$ and $%
B_{\alpha n}^\circ$ give non-vanishing contribution, we get
\begin{align*}
\mathbf{E}&\{e^\alpha_{yn}(x)(A_{\alpha n}^{-1})^\circ(z)(B_{\alpha
n}A_{\alpha n}^{-1})^\circ(z_1)\} \\
&=-\frac{\mathbf{E}\{e^\alpha_{yn}(x)A_{\alpha n}^\circ(z)B_{\alpha
n}^\circ(z_1)\}} {\mathbf{E}\{A_{\alpha n}(z) \}^2\mathbf{E}%
\{A_{\alpha n}(z_1) \}} +\frac{\mathbf{E}\{e^\alpha_{yn}(x)A_{\alpha
n}^\circ(z)A_{\alpha n}^\circ(z_1)B_{\alpha n}(z_1)\}}{\mathbf{E}%
\{A_{\alpha n}(z) \}^2\mathbf{E}\{A_{\alpha n}(z_1)
\}^2}+O(n^{-3/2})
\\
&=\mathbf{E}\{e^\alpha_{yn}(x)\}\Big[-\frac{\mathbf{E}\{A_{\alpha
n}^\circ(z)B_{\alpha n}^\circ(z_1)\}} {\mathbf{E}\{A_{\alpha n}(z) \}^2%
\mathbf{E}\{A_{\alpha n}(z_1) \}} \\
&\hspace{4.5cm}+\frac{\mathbf{E}\{A_{\alpha n}^\circ(z)A_{\alpha
n}^\circ(z_1)\}\mathbf{E}\{B_{\alpha n}(z_1)\}}{\mathbf{E}%
\{A_{\alpha n}(z) \}^2\mathbf{E}\{A_{\alpha n}(z_1) \}^2}\Big]%
+O(n^{-3/2}).
\end{align*}
This, the bound $|Z_{yn}-\mathbf{E}%
\{e^\alpha_{yn}\}|=O{(n^{-1/2}})$  (cf. (\ref{Rn<})), and (\ref{Ba}) imply
\begin{align}
\mathbf{E}\{e^\alpha_{yn}&(x)(A_{\alpha n}^{-1})^\circ(z)(B_{\alpha
n}A_{\alpha n}^{-1})^\circ(z_1)\}  \notag \\
&=-Z_{yn}(x)\frac{1} {\mathbf{E}\{A_{\alpha n}(z) \}^2}\frac{\partial}{%
\partial z_1} \frac{\mathbf{E}\{A_{\alpha n}^\circ(z)A_{\alpha
n}^\circ(z_1)\}} {\mathbf{E}\{A_{\alpha n}(z_1) \}}+O(n^{-3/2})
\notag \\
&=-Z_{yn}(x)\frac{\tau_\alpha^2D(z,z_1)} {(1+\tau_\alpha f(z))^2}%
+O(n^{-3/2}),  \label{EAB/A}
\end{align}
where
\begin{align}
D_{\tau_\alpha }(z,z_1)=\frac{\partial}{\partial z_1} \frac{{%
2\Delta f}/{\Delta z+(a+b)f(z)f(z_1)}}{1+\tau_\alpha f(z_1)},
\label{D}
\end{align}
and we used (\ref{lim1/EA}) and (\ref{limAA}). Plugging (\ref{Rn1}) -- (\ref{D})
in (\ref{e-e=}) and applying (\ref{sigma}) -- (\ref{nmc}), we get for $%
T_2^{(n)}$ of (\ref{Y=}):
\begin{align*}
T_2^{(n)} =-\frac{xZ_{yn}(x)}{2\pi z}\int d\lambda_1\varphi(%
\lambda_1) \;c\int_{0}^\infty\frac{\tau^2d\sigma (\tau)} {(1+\tau f(z))^2}%
[D_{\tau}(z,z_1)-D_{\tau}(z,\overline{z_1})]+o(1).
\end{align*}
This and (\ref{Y=}) -- (\ref{T1=}) yield
\begin{align*}
Y_{n}(z,x)=&\Big(c\int_{0}^\infty\frac{\tau d\sigma (\tau)} {(1+\tau f(z))^2}%
-z\Big)^{-1} \\
&\times\frac{xZ_{yn}(x)}{2\pi }\int
d\lambda_1\varphi(\lambda_1)
\;c\int_{0}^\infty\frac{\tau^2d\sigma (\tau)} {(1+\tau f(z))^2}%
[D_{\tau}(z,z_1)-D_{\tau}(z,\overline{z_1})]+o(1),
\end{align*}
and after some calculations based on (\ref{f}) we finally get
\begin{align}
Y_{n}(z,x)=\frac{xZ_{yn}(x)}{2\pi }\int   d\lambda_1\varphi(%
\lambda_1) \;[C(z,z_1)-C(z,\overline{z_1})]+o(1),  \label{Y}
\end{align}
where $C(z,z_1)$ is defined in (\ref{Cov}). Now it follows from (\ref%
{dZ=}) and (\ref{Y}) that
\begin{align}
\frac{d}{dx}Z_{yn}(x)={-xV_{y}[\varphi]Z_{yn}(x)}+o(1),  \label{dZ}
\end{align}
where $V_{y}[\varphi]$ is given in (\ref{Var}). If we consider $\widetilde{Z}%
_{yn}(x)=e^{x^2V_{y}[\varphi]}Z_{yn}(x)$, then (\ref{dZ}) yields
\begin{align*}
{d}\widetilde{Z}_{yn}(x)/dx=o(1),\quad n\rightarrow\infty
\end{align*}
for any $|x|\le C $, and since $\widetilde{Z}_{yn}(0)={Z}_{yn}(0)=1$, we
obtain $\widetilde{Z}_{yn}(x)=1+o(1)$ uniformly in $|x|\le C$. Hence,
\begin{align*}
\lim_{n\rightarrow\infty}Z_{yn}(x)=\exp\{{-x^2V_{y}[\varphi]}\}.
\end{align*}
Now we take into account (\ref{limny}), allowing us to pass to the limit
$y\downarrow 0$, and obtain  (\ref{Var}). The theorem is proved.

\section{Auxiliary results}

\label{s:aux}

\begin{lemma}
\label{l:aux} Under conditions of Theorem \ref{t:clt} we have:
\begin{align}
&(i)\;\lim_{n\rightarrow\infty}\mathbf{E}\{n^{-1}\gamma^{\alpha}_n\}=f,
\;|\gamma_n-\gamma_n^\alpha|=O(1),\quad n\rightarrow\infty,  \label{ga-f} \\
&(ii)\;\mathbf{Var}\{\gamma_n-\gamma_n^\alpha\}=O(n^{-1}),\quad
n\rightarrow\infty,  \label{vg-ga} \\
&(iii)\;\mathbf{E}\{|A_{\alpha n}^\circ|^p\}\le \frac{C\tau_\alpha^{p}}{%
n^{p/2}|\Im z|^{p}},\;\mathbf{E}\{|B_{\alpha n}^\circ|^p\}\le \frac{%
C\tau_\alpha^{p}}{n^{p/2}|\Im z|^{2p}},\quad p=1,2,3,4,  \label{EA0p<} \\
&(iv)\; {\mathbf{E}\{A_{\alpha n}\}^{-1}}=(1+\tau_\alpha f)^{-1}+o(1),\quad
n\rightarrow\infty,  \label{lim1/EA} \\
&(v)\;\mathbf{Var}\{n\mathbf{E}_\alpha\{A_{\alpha n}^\circ(z_1)A_{\alpha
n}(z_2)\}\}=o(1),\quad n\rightarrow\infty,  \label{varEA} \\
&(vi)\;\lim_{n\rightarrow\infty}\tau_\alpha^{-2}n\mathbf{E}\{A_{\alpha
n}^\circ(z_1)A_{\alpha n}(z_2)\}=(a+b)f(z_{1})f(z_2)+{2\Delta f}/{\Delta z},
\label{limAA}
\end{align}
where $\gamma_n^\alpha$, $A_{\alpha n}$, $B_{\alpha n}$ are defined in (\ref%
{Ga}) and (\ref{Aa}) -- (\ref{Ba}), and $f$ is a unique solution of (\ref{MPE}).
\end{lemma}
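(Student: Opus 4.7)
The plan is to derive each statement by exploiting the conditional structure with respect to $\mathbf{y}_\alpha$ (holding the other $\mathbf{y}_\beta$'s fixed), combining isotropy and unconditionality with the estimates of Lemma \ref{l:varg} and the limit in Theorem \ref{t:MPP}. For part (i), since $M_n^\alpha$ has the same form (\ref{Mnm}) as $M_n$ with one term removed (so $(m-1)/n\to c$), Theorem \ref{t:MPP} applied to $M_n^\alpha$ yields $n^{-1}\gamma_n^\alpha(z)\to f(z)$ in probability for non-real $z$, and the uniform bound $|n^{-1}\gamma_n^\alpha|\le 1/|\Im z|$ upgrades this to convergence of expectations. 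The second part of (i) is exactly the bound $|\gamma_n-\gamma_n^\alpha|=|B_{\alpha n}/A_{\alpha n}|\le 1/|\Im z|$ established in (\ref{B/A<}). Part (iv) then follows from (i), since $\mathbf{E}\{A_{\alpha n}\}=1+\tau_\alpha\mathbf{E}\{n^{-1}\gamma_n^\alpha\}$ by (\ref{EA}).

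For (ii), I would use the variance decomposition $\mathbf{Var}(X)=\mathbf{E}\{\mathbf{Var}_\alpha(X)\}+\mathbf{Var}\{\mathbf{E}_\alpha(X)\}$ with $X=\gamma_n-\gamma_n^\alpha=-B_{\alpha n}/A_{\alpha n}$. The conditional variance in $\mathbf{y}_\alpha$ alone is controlled by (\ref{AEA})-type manipulations using (\ref{VarA})--(\ref{VarB}), yielding $O(\tau_\alpha^2/n)$; the variance of $\mathbf{E}_\alpha\{B_{\alpha n}/A_{\alpha n}\}$, which is a rational expression in $\gamma_n^\alpha$ and its $z$-derivative, is $O(n^{-2})$ since $\mathbf{Var}\{\gamma_n^\alpha\}=O(1)$ by (\ref{var1}). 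For (iii), I split $A_{\alpha n}^\circ=(A_{\alpha n})^\circ_\alpha+\tau_\alpha n^{-1}(\gamma_n^\alpha)^\circ$: the $p$-th moment of the first summand equals $\tau_\alpha^p\mathbf{E}\{|(G^\alpha\mathbf{y}_\alpha,\mathbf{y}_\alpha)^\circ_\alpha|^p\}$, bounded via the very-good conditions (\ref{good}) for $p=2$ and (\ref{m4Ayy}) for $p=4$ applied to $G^\alpha$ of norm $\le|\Im z|^{-1}$ (Hölder covers $p=1,3$), while the second summand is controlled by (\ref{var1}) and (\ref{g4<}) applied to $M_n^\alpha$. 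The $B_{\alpha n}^\circ$ bounds follow in the same way with $(G^\alpha)^2$ in place of $G^\alpha$, contributing an extra factor $|\Im z|^{-p}$.

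For (v) and (vi), the central identity is the conditional covariance formula obtained by applying the quadratic-form variance computation (\ref{Var=}) jointly to $z_1,z_2$,
\begin{align*}
\tau_\alpha^{-2}\mathbf{Cov}_\alpha(A_{\alpha n}(z_1),A_{\alpha n}(z_2))=&\,(a_{2,2}-n^{-2})\gamma_n^\alpha(z_1)\gamma_n^\alpha(z_2)+2a_{2,2}\Tr G^\alpha(z_1)G^\alpha(z_2)\\
&+\kappa_4\sum_{j=1}^{n} G^\alpha_{jj}(z_1)G^\alpha_{jj}(z_2).
\end{align*}
For (vi), inserting the expansions $a_{2,2}=n^{-2}+an^{-3}+o(n^{-3})$ and $\kappa_4=bn^{-2}+o(n^{-2})$ and multiplying by $n$, the three terms contribute in the limit $af(z_1)f(z_2)$, $2\Delta f/\Delta z$ (via the resolvent identity $\Tr G^\alpha(z_1)G^\alpha(z_2)=(\gamma_n^\alpha(z_1)-\gamma_n^\alpha(z_2))/\Delta z$ together with (i)), and $bf(z_1)f(z_2)$ (using the diagonal-concentration statement $n^{-1}\sum_j G^\alpha_{jj}(z_1)G^\alpha_{jj}(z_2)\to f(z_1)f(z_2)$). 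The remaining contribution to $n\mathbf{E}\{A^\circ(z_1)A(z_2)\}$ coming from the $\mathbf{y}_\alpha$-independent part of $A^\circ$ is $O(\tau_\alpha^2 n^{-2}\mathbf{Var}(\gamma_n^\alpha))=O(n^{-2})$ and so vanishes after multiplication by $n/\tau_\alpha^{2}$. For (v), the required $o(1)$ variance reduces to showing that each of the three ``trace-like'' quantities above, scaled appropriately, fluctuates by $o(1)$; this needs an extension of Lemma \ref{l:varg} to $\Tr G(z_1)G(z_2)$ and to the diagonal sum.

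I expect the main obstacle to lie in the $\kappa_4$-term $n^{-1}\sum_j G^\alpha_{jj}(z_1)G^\alpha_{jj}(z_2)$: its limit cannot be produced from a resolvent identity and requires a genuine self-averaging argument showing that each diagonal entry $G^\alpha_{jj}$ concentrates around $n^{-1}\gamma_n^\alpha$. Establishing this, together with the analogous variance control needed in (v), is the most technical point of the lemma.
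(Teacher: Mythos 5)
Your proposal follows essentially the same path as the paper. Part (i) is what the paper does (there it derives the first claim from $\lim\mathbf{E}\{n^{-1}\gamma_n\}=f$ plus the $O(1)$ bound on $\gamma_n-\gamma_n^\alpha$; applying Theorem~\ref{t:MPP} directly to $M_n^\alpha$ is equivalent). Parts (iii), (iv), (v), (vi) use exactly the paper's decomposition (\ref{A0Aa0}) and the conditional second-moment identity (\ref{EAA}); you correctly identify that the $\kappa_4$-term requires a genuine diagonal-concentration statement $n^{-1}\sum_j G^\alpha_{jj}(z_1)G^\alpha_{jj}(z_2)\to f(z_1)f(z_2)$ in mean and with vanishing variance — this is precisely what the paper proves separately as Lemma~\ref{l:g12} ((\ref{gff}) and (\ref{vg12})), via the resolvent identity in $j$ and the rank-one perturbation formula, and it is indeed the most technical point.

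The only place you diverge is (ii). You propose the law-of-total-variance split $\mathbf{Var}=\mathbf{E}\{\mathbf{Var}_\alpha\}+\mathbf{Var}\{\mathbf{E}_\alpha\}$ for $\Delta\gamma_n=-B_{\alpha n}/A_{\alpha n}$, but $\mathbf{E}_\alpha\{B_{\alpha n}/A_{\alpha n}\}$ is not literally a rational function of $\gamma_n^\alpha$ and $(\gamma_n^\alpha)'$ — it is the conditional mean of a nonlinear functional of $\mathbf{y}_\alpha$, so bounding its variance requires first expanding $A_{\alpha n}^{-1}$ around its conditional mean and controlling the remainder. The paper instead writes $V=\mathbf{E}\{\Delta\gamma_n\,\Delta\overline{\gamma}_n^\circ\}$, applies the identity (\ref{AEA}), and using (\ref{B/A<}), (\ref{EA>>}), Schwarz and (\ref{VarA})--(\ref{VarB}) arrives at the self-bounding inequality $V\le CV^{1/2}n^{-1/2}$, which gives $V=O(n^{-1})$ without ever having to analyze the conditional expectation of the ratio. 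Your route can be made to work with that extra expansion, but the paper's self-bounding trick is cleaner and sidesteps it.
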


\begin{proof}
{\it (i)} It follows from (\ref{g-ga}) and (\ref{B/A<})
that $ |\gamma_n-\gamma_n^\alpha|\le |\Im z|^{-1}$. On the other hand,
Theorem \ref{t:MPP} implies $\lim_{n\rightarrow\infty}\mathbf{E}\{n^{-1}\gamma_n\}=f$.
This leads to (\ref{ga-f}).

\medskip
{\it (ii)}  Consider  $V=\mathbf{Var}\{\Delta \gamma _{n}\}$, $ \Delta \gamma _{n}=\gamma _{n}-\gamma_{n}^{\alpha}$. By  (\ref{g-ga})  and (\ref{AEA})
we have \begin{align*}
V=\mathbf{E}\{\Delta \gamma _{n}\Delta\overline{ \gamma} _{n}^{\circ}\}&=-\mathbf{E}\{(B_{\alpha n}/A_{\alpha n})\Delta\overline{ \gamma} _{n}^{\circ}\}
\\
&=-\mathbf{E}\{B_{\alpha n}\Delta\overline{ \gamma} _{n}^{\circ}\}/\mathbf{E}\{A_{\alpha n} \}-\mathbf{E}\{A_{\alpha n}^{\circ}(B_{\alpha n}/A_{\alpha n})\Delta\overline{ \gamma} _{n}^{\circ}\}/\mathbf{E}\{A_{\alpha n} \},
 \end{align*}
and by  (\ref{B/A<}), (\ref{EA>>}), the Schwarz  inequality, and
(\ref{VarA}) -- (\ref{VarB})
\begin{align*}
V\le CV^{1/2}(\mathbf{Var}\{B_{\alpha n}\}^{1/2}+\mathbf{Var}\{A_{\alpha n}\}^{1/2}/|\Im z|)\le CV^{1/2}n^{-1/2}.
 \end{align*}
This yields (\ref{vg-ga}).

\medskip
{\it (iii)} Note  that
\begin{equation}
A_{\alpha n}^\circ=(A_{\alpha n})_\alpha^\circ+\tau_{\alpha}n^{-1}(\gamma^{\alpha})^{\circ}_n,\label{A0Aa0}
\end{equation}
and that (\ref{var1}) and (\ref{g4<}) imply
\begin{equation}
\mathbf{Var}\{ \gamma _{n}^\alpha\},\;\mathbf{E}\{|(\gamma^{\alpha}_n)^\circ|^4\}=O(1),
\quad n\rightarrow\infty. \label{ga4<}
\end{equation}
This allows us to replace  $A_{\alpha n}^\circ$ by
$(A_{\alpha n})^\circ_\alpha$ as $n \to \infty$
(see e.g. (\ref{3})). In view of (\ref{VarA}) and (\ref{A0Aa0}) -- (\ref{ga4<})
we have
\begin{equation}
\mathbf{E}\{|A_{\alpha n}^\circ|^2\}=\mathbf{E}\{|(A_{\alpha n})_\alpha^\circ|^2\}+\tau^{2}_{\alpha}n^{-2}\mathbf{Var}\{ \gamma _{n}^\alpha\}=O(n^{-1}). \label{5}
\end{equation}
This yields (\ref{EA0p<}) for $p=2$.
 Similarly, (\ref{EA0p<}) for $p=3,4$ follows from (\ref{m4Ayy})
 and  (\ref{A0Aa0}) -- (\ref{ga4<}).

\medskip
{\it (iv)} We have ${\mathbf{E}\{A_{\alpha n}\}^{-1}}=(1+\tau_\alpha f)^{-1}+r_n$,
where
\begin{equation*}
r_n=\tau_\alpha(f-\mathbf{E}\{n^{-1}\gamma^{\alpha}_n\})
{\mathbf{E}\{A_{\alpha n}\}^{-1}}(1+\tau_\alpha f)^{-1}.
\end{equation*}
The bound $|(1+\tau_\alpha f)^{-1}|\le |z/\Im z|$, (\ref{EA>})  and
(\ref{ga-f}) imply $r_n=o(1),$ hence (\ref{lim1/EA}).

\medskip
{\it (v)} It follows from (\ref{yyyy}) --  (\ref{Var=}) and  (\ref{a22}) --  (\ref{k4}) that
  \begin{align}
 n\tau_{\alpha}^{-2}&\mathbf{E}_\alpha\{(A_{\alpha n})_\alpha^\circ(z_1)A_{\alpha n}(z_2)\}=n\Big[(a_{2,2}-n^{-2})\Tr G^\alpha(z_{1})\Tr G^\alpha(z_{2})\label{EAA}
 \\
 &\hspace{2cm}+2a_{2,2}\Tr  G^\alpha(z_{1})G^\alpha(z_{2})+\kappa_{4}\sum_{j=1}^n G^\alpha_{jj}(z_{1})
 G^\alpha_{jj}(z_{2})\Big]\notag
\\
 &=an^{-2}\gamma^{\alpha}_n(z_1)\gamma^{\alpha}_n(z_2)+2n^{-1}\frac{\gamma^{\alpha}_n(z_1)
 -\gamma^{\alpha}_n(z_2)}{z_{1}-z_{2}}+bg_n^\alpha(z_{1},z_{2})+O(n^{-1}),\notag
 \end{align}
where $g_n^\alpha$ is defined in (\ref{gna}) (see Lemma \ref{l:g12}). Now (\ref{varEA}) follows from  (\ref{ga4<}) -- (\ref{5}) and  (\ref{vg12}).

\medskip
{\it (vi)}  The relation (\ref{limAA}) follows from (\ref{ga-f}), (\ref{EAA}) and  (\ref{gff}).
\end{proof}

\begin{lemma}
\label{l:g12} Consider
\begin{equation}\label{gn12}
g_n(z_1,z_2)=n^{-1}\sum_{j=1}^nG_{jj}(z_1)G_{jj}(z_2).
\end{equation}
Then under conditions of Theorem \ref{t:clt} we have as $%
n\rightarrow\infty$
\begin{align}
&\mathbf{E}\{ g_n(z_1,z_2)\}=f(z_1)f(z_2)+o(1),  \label{gff} \\
&\mathbf{Var}\{ g_n(z_1,z_2)\}=o(1),  \label{vg12}
\end{align}
where $f$ is given by (\ref{MPE}).
\end{lemma}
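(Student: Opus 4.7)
My plan is to prove the two claims separately, handling the variance bound (\ref{vg12}) first by a martingale argument and then tackling the mean asymptotics (\ref{gff}) by deriving a self-consistent equation for $\mathbf{E}\{g_n(z_1,z_2)\}$ that can be matched against (\ref{MPE}).

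For (\ref{vg12}) I will reuse the martingale decomposition exploited in the proof of Lemma \ref{l:varg}. Writing
\[
\mathbf{Var}\{g_n\} = \sum_{\alpha=1}^m \mathbf{E}\{|\mathbf{E}_{\leq\alpha-1}\{g_n\}-\mathbf{E}_{\leq\alpha}\{g_n\}|^2\} \leq 4\sum_\alpha \mathbf{E}\{|g_n-g_n^\alpha|^2\},
\]
where $g_n^\alpha$ denotes $g_n$ with $\tau_\alpha$ set to zero, the rank-one identity (\ref{r1}) gives $G_{jj}(z)-G^\alpha_{jj}(z)=-\tau_\alpha(G^\alpha(z)y_\alpha)_j^2/A_{\alpha n}(z)$. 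Telescoping the product $G_{jj}(z_1)G_{jj}(z_2)-G^\alpha_{jj}(z_1)G^\alpha_{jj}(z_2)$, using $|G_{jj}|\leq|\Im z|^{-1}$, the identity $\sum_j|(G^\alpha y_\alpha)_j|^2=(|G^\alpha|^2 y_\alpha,y_\alpha)\leq \|y_\alpha\|^2/|\Im z|^2$, and the bound (\ref{A>}) on $|A_{\alpha n}|^{-1}$, I obtain $|g_n-g_n^\alpha|\leq C_z\tau_\alpha\|y_\alpha\|^2/n$. Since (\ref{m4Ayy}) applied with $A_n=I$ together with isotropy yields $\mathbf{E}\{\|y_\alpha\|^4\}=1+O(\widetilde\delta_n)=O(1)$, summation over $\alpha$ combined with (\ref{m4}) gives $\mathbf{Var}\{g_n\}=O(n^{-1})=o(1)$.

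For (\ref{gff}) I begin with $M_n G(z_1)=I+z_1 G(z_1)$; taking the $(j,j)$ entry yields $1+z_1 G_{jj}(z_1)=\sum_\alpha \tau_\alpha y_{\alpha j}(G(z_1)y_\alpha)_j$. Multiplying by $G_{jj}(z_2)$, summing over $j$ and dividing by $n$ produces
\[
z_1 g_n(z_1,z_2)+n^{-1}\gamma_n(z_2)=n^{-1}\sum_\alpha \tau_\alpha (D_2 G(z_1)y_\alpha,y_\alpha),\quad D_2=\mathrm{diag}(G_{jj}(z_2)).
\]
I now apply the rank-one update $G(z_1)y_\alpha=A_{\alpha n}(z_1)^{-1}G^\alpha(z_1)y_\alpha$ and replace $D_2$ by $D_2^\alpha=\mathrm{diag}(G^\alpha_{jj}(z_2))$. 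Isotropy gives $\mathbf{E}_\alpha\{(D_2^\alpha G^\alpha(z_1)y_\alpha,y_\alpha)\}=n^{-1}\mathrm{Tr}(D_2^\alpha G^\alpha(z_1))=g_n^\alpha(z_1,z_2)$. Combining (\ref{AEA}) with the moment bounds (\ref{EA0p<}), the already established variance bound, and the concentration $\mathbf{E}_\alpha\{A_{\alpha n}(z_1)\}=1+\tau_\alpha n^{-1}\gamma_n^\alpha(z_1)\to 1+\tau_\alpha f(z_1)$ coming from Theorem \ref{t:MPP}, the right-hand side reduces in the limit to $\mathbf{E}\{g_n(z_1,z_2)\}\cdot c\int\tau(1+\tau f(z_1))^{-1}d\sigma(\tau)+o(1)$. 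Passing to the limit on both sides using $\mathbf{E}\{n^{-1}\gamma_n(z_2)\}\to f(z_2)$, I arrive at
\[
z_1\lim_n\mathbf{E}\{g_n\}+f(z_2)=\lim_n\mathbf{E}\{g_n\}\cdot c\int\frac{\tau\,d\sigma(\tau)}{1+\tau f(z_1)}.
\]
The rearrangement $z_1-c\int\tau(1+\tau f(z_1))^{-1}d\sigma=-1/f(z_1)$ of (\ref{MPE}) turns this into $-\lim_n\mathbf{E}\{g_n\}/f(z_1)=-f(z_2)$, yielding $\lim_n\mathbf{E}\{g_n(z_1,z_2)\}=f(z_1)f(z_2)$.

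The main obstacle is the careful control of the error terms generated by the substitutions in the self-consistent equation, in particular because $D_2$ itself depends on $y_\alpha$. Each replacement ($D_2\mapsto D_2^\alpha$, $A_{\alpha n}\mapsto\mathbf{E}_\alpha A_{\alpha n}$, $\mathbf{E}_\alpha A_{\alpha n}\mapsto 1+\tau_\alpha f(z_1)$) produces a residual polynomial form in $y_\alpha$ of rank at most three whose $L^2$ moment is $O(n^{-1/2})$ by (\ref{m4Ayy}), (\ref{EA0p<}) and (\ref{vg-ga}); the prefactor $\tau_\alpha/n$ in the sum together with (\ref{m4}) ensures that after summation over $\alpha=1,\ldots,m$ the aggregate error is $o(1)$, which is exactly what is required to close the argument.
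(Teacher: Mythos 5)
Your proof is correct and, for the mean asymptotics (\ref{gff}), follows essentially the same route as the paper: both start from the resolvent identity $M_nG(z_1)=I+z_1G(z_1)$ taken on the diagonal, weight by $G_{jj}(z_2)$, isolate $\mathbf{y}_\alpha$ via the rank-one update (\ref{r1}), use isotropy to produce $g_n^\alpha$, and then close the resulting self-consistent equation through the rearrangement $z_1-c\int\tau(1+\tau f(z_1))^{-1}d\sigma=-1/f(z_1)$ of (\ref{MPE}). For the variance (\ref{vg12}) the paper merely says it ``follows the scheme of Lemma~\ref{l:varg}''; your argument is a bit more elementary and self-contained — you exploit the extra $n^{-1}$ in the definition of $g_n$ to get the deterministic bound $|g_n-g_n^\alpha|\le C_z\tau_\alpha\|\mathbf{y}_\alpha\|^2/n$, which feeds directly into the martingale estimate without the centering-and-cancellation machinery that Lemma~\ref{l:varg} needs for $\gamma_n$ (where $|\gamma_n-\gamma_n^\alpha|$ is only $O(1)$). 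One place you should be more careful if you write this up fully: the residual from replacing $D_2$ by $D_2^\alpha$ is a quartic form in $\mathbf{y}_\alpha$ of the type $\sum_j(G^\alpha(z_2)\mathbf{y}_\alpha)_j^2(G^\alpha(z_1)\mathbf{y}_\alpha)_jy_{\alpha j}$, and the crude $L^2$/$L^\infty$ bound you gesture at is not good enough by itself; you need to take $\mathbf{E}_\alpha$ and invoke (\ref{yyyy}) together with $a_{2,2},\kappa_4=O(n^{-2})$ to get the $O(n^{-1})$ per-term decay (this is exactly the computation the paper carries out for $r_n$ in (\ref{rn=})), after which the $\tau_\alpha/n$ prefactor and (\ref{m4}) give the required $o(1)$.
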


\begin{proof}
 Since by the resolvent identity
 \begin{align*}
G_{jj}(z_1)=-\frac{1}{z_{1}}+\frac{1}{z_{1}}(MG)_{jj}(z_1)=-\frac{1}{z_{1}}+\frac{1}{z_{1}}
\sum_{\alpha=1}^{m}\tau_{\alpha}y_{\alpha j}(G\mathbf{y}_{\alpha })_j(z_1),
 \end{align*}
 then
  \begin{align}
\mathbf{E}\{ g_n(z_1,z_2)\}=-\frac{\mathbf{E}\{n^{-1} \gamma_ n(z_2)\}}{z_{1}}+
\sum_{\alpha,j=1}^{m,n}\frac{\tau_{\alpha}\mathbf{E}\{y_{\alpha j}(G\mathbf{y}_{\alpha })_j(z_1)G_{jj}(z_2)\}}{nz_{1}}.\label{Eg}
 \end{align}
It follows from (\ref{r1}) that $(G\mathbf{y}_{\alpha })_j=A_{\alpha n}^{-1}(G^{\alpha}\mathbf{y}_{\alpha })_j$, $G_{jj}=G_{jj}^\alpha-\tau_{\alpha}A_{\alpha n}^{-1}(G^{\alpha}\mathbf{y}_{\alpha })_j^{2}$,
%  \begin{align*}
% (G\mathbf{y}_{\alpha })_j=A_{\alpha n}^{-1}(G^{\alpha}\mathbf{y}_{\alpha })_j,\quad G_{jj}=G_{jj}^\alpha-\tau_{\alpha}A_{\alpha n}^{-1}(G^{\alpha}\mathbf{y}_{\alpha })_j^{2},
%  \end{align*}
 where $A_{\alpha n}$ is defined in (\ref{Aa}).   Applying (\ref{AEA}), we
 obtain
 \begin{align}
\mathbf{E}\{y_{\alpha j}(G\mathbf{y}_{\alpha })_j(z_1)G_{jj}(z_2)\}&=\mathbf{E}\{y_{\alpha j}(A_{\alpha n}^{-1}(G^{\alpha}\mathbf{y}_{\alpha })_j)(z_1)G^{\alpha}_{jj}(z_2)\}\label{E}
\\
&\quad-\tau_{\alpha}\mathbf{E}\{y_{\alpha j}(A_{\alpha n}^{-1}(G^{\alpha}\mathbf{y}_{\alpha })_j)(z_1)(A_{\alpha n}^{-1}(G^{\alpha}\mathbf{y}_{\alpha })_j^{2})(z_2)\}\notag
\\
&=\mathbf{E}\{A_{\alpha n}(z_{1})\}^{-1}\mathbf{E}\{\mathbf{E}_\alpha\{y_{\alpha j}(G^{\alpha}\mathbf{y}_{\alpha })_j(z_1)\}G^{\alpha}_{jj}(z_2)\}+r_{n},\notag
 \end{align}
 where
 \begin{align}
r_n=&\frac{\mathbf{E}\{y_{\alpha j}(A^{\circ}_{\alpha n}A_{\alpha n}^{-1}(G^{\alpha}\mathbf{y}_{\alpha })_j)(z_1)G^{\alpha}_{jj}(z_2)\}}{\mathbf{E}\{A_{\alpha n}(z_{1})\}}\label{rn=}
\\
&-\frac{\mathbf{E}\{y_{\alpha j}((1+A^{\circ}_{\alpha n}A_{\alpha n}^{-1})(G^{\alpha}\mathbf{y}_{\alpha })_j)(z_1)((1+A^{\circ}_{\alpha n}A_{\alpha n}^{-1})^{-1}(G^{\alpha}\mathbf{y}_{\alpha })_j^{2})(z_2)\}}{\mathbf{E}\{A_{\alpha n}(z_{1})\}\mathbf{E}\{A_{\alpha n}(z_{2})\}}.\notag
 \end{align}
Since $|(G^{\alpha}\mathbf{y}_{\alpha })_j|\le|\Im z|^{-2}||\mathbf{y}_{\alpha }||^2$, then by the Schwarz inequality, (\ref{EA0p<}), and (\ref{a22}) the first term in
the r.h.s. of (\ref{rn=}) is less than
$$C\mathbf{Var}\{A_{\alpha n}\}^{1/2}\mathbf{E}\{y_{\alpha j}^2|(G^{\alpha}\mathbf{y}_{\alpha })_j|^{2}\}=O(n^{-3/2}).
$$
 By the same reason, all the terms in the numerator of the second term on the r.h.s. of (\ref{rn=}), which contain $A^{\circ}_{\alpha n}$ are of the order $O(n^{-3/2})$. For the only term that does not contain $A^{\circ}_{\alpha n}$ we have
 \begin{align*}
\mathbf{E}\{y_{\alpha j}(G^{\alpha}\mathbf{y}_{\alpha })_j(z_1)(G^{\alpha}\mathbf{y}_{\alpha })_j^{2}(z_2)\}&=\sum_{p,q,s=1}^n G_{jp}^\alpha (z_1)G_{jq}^\alpha  (z_2)G_{js}^\alpha(z_2)
\mathbf{E}\{y_{\alpha j}y_{\alpha p}y_{\alpha q}y_{\alpha s}\}
\\
&=3a_{2,2}G^\alpha_{jj}(G^\alpha G^\alpha)_{jj}+\kappa_{4} G_{jj}^\alpha G_{jj}^\alpha G_{jj}^\alpha=O(n^{-2}),
 \end{align*}
where we used (\ref{yyyy}) and (\ref{a22L}).
 Hence
 \begin{equation}
 r_n=O(n^{-3/2}),\quad n\rightarrow\infty.\label{rn}
 \end{equation}
 Besides, it follows from (\ref{isotropic}) that
 \begin{equation}
 \mathbf{E}_\alpha\{y_{\alpha j}(G^{\alpha}(z_{1})\mathbf{y}_{\alpha })_j\}=n^{-1}G_{jj}^{\alpha}(z_{1}).
 \label{2}
 \end{equation}
 Plugging (\ref{rn}) -- (\ref{2}) in (\ref{E}) and then  in (\ref{Eg}), we
 get
  \begin{align*}
\mathbf{E}\{ g_n(z_1,z_2)\}=-\frac{1}{z_{1}}f(z_2)+\frac{1}{n^2z_{1}}
\sum_{\alpha,j=1}^{m,n}\frac{\tau_{\alpha}\mathbf{E}\{ g_n^\alpha(z_1,z_2)\}}{1+\tau_{\alpha}f(z_1)}+o(1),\quad n\rightarrow\infty,
%\label{Eg}
 \end{align*}
 where
  \begin{equation}
 g_n^\alpha(z_1,z_2)=n^{-1}\sum_{j=1}^nG^\alpha_{jj}(z_1)G^\alpha_{jj}(z_2),\label{gna}
 \end{equation}
and we took into account (\ref{ga-f}), (\ref{lim1/EA}). Using the
argument similar to that leading to (\ref{rn}), we obtain
$\mathbf{E}\{ g_n-g_n^\alpha\}=o(1)$, $n\rightarrow\infty$.
 This and (\ref{sigma}) show that
  \begin{align*}
\lim_{n\rightarrow\infty}\mathbf{E}\{ g_n(z_1,z_2)\}=\Big(c\int&\frac{\tau
d\sigma (\tau)}
{1+\tau f(z_{1})}-z_{1}\Big)^{-1}f(z_{2})=f(z_{1})f(z_{2}),
%\label{Eg}
 \end{align*}
 and we get (\ref{gff}). The proof of (\ref{vg12}) follows the scheme of proof
 in Lemma \ref{l:varg}.
 \end{proof}

\end{document}